\documentclass{amsart}
\usepackage{amsfonts}
\usepackage{hyperref}

\setcounter{MaxMatrixCols}{10}

\newtheorem{theorem}{Theorem}
\theoremstyle{plain}

\newtheorem{corollary}{Corollary}

\newtheorem{lemma}{Lemma}

\newtheorem{proposition}{Proposition}
\newtheorem{remark}{Remark}

\numberwithin{equation}{section}
\input{tcilatex}

\begin{document}
\title[Some new inequalities for trigonometric functions]{Some new
inequalities for trigonometric functions and corresponding ones for means}
\author{Zhen-Hang Yang}
\address{Power Supply Service Center, Zhejiang Electric Power Corporation
Research Institute, Hangzhou City, Zhejiang Province, 310009, China}
\email{yzhkm@163.com}
\date{April 15, 2013}
\subjclass[2010]{Primary 26D05, 33B10; Secondary 26E60, 26D15 }
\keywords{Trigonometric function, inequality, mean}
\thanks{This paper is in final form and no version of it will be submitted
for publication elsewhere.}

\begin{abstract}
In this paper, the versions of trigonometric functions of certain known
inequalities for hyperbolic ones are proved, and then corresponding
inequalities for means are presented.
\end{abstract}

\maketitle

\section{Introduction}

We begin with the following lemma.

\begin{lemma}
\label{Lemma a}Let $M\left( x,y\right) $ be a homogeneous mean of positive
arguments $x$ and $y$. Then 
\begin{equation*}
M\left( x,y\right) =\sqrt{xy}M\left( e^{t},e^{-t}\right) ,
\end{equation*}%
where $t=\frac{1}{2}\ln \left( x/y\right) $.
\end{lemma}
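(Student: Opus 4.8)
The plan is to exploit the single defining property of a homogeneous mean that we actually need, namely that $M$ is homogeneous of degree one, $M\left( \lambda x,\lambda y\right) =\lambda M\left( x,y\right) $ for every $\lambda >0$. The whole lemma should then collapse to one substitution, once $x$ and $y$ are rewritten in a form that exhibits $\sqrt{xy}$ as a common scaling factor.

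First I would unwind the definition of $t$. Since $t=\frac{1}{2}\ln \left( x/y\right) $, exponentiating gives $e^{t}=\left( x/y\right) ^{1/2}=\sqrt{x/y}$ and $e^{-t}=\sqrt{y/x}$, both well defined because $x,y>0$. Next I would verify the two algebraic identities that drive the proof:
\begin{equation*}
\sqrt{xy}\,e^{t}=\sqrt{xy}\sqrt{x/y}=\sqrt{x^{2}}=x,\qquad \sqrt{xy}\,e^{-t}=\sqrt{xy}\sqrt{y/x}=\sqrt{y^{2}}=y,
\end{equation*}
where positivity of $x$ and $y$ is used again to pass from $\sqrt{x^{2}}$ to $x$ and from $\sqrt{y^{2}}$ to $y$ without absolute values.

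With these identities in hand, the conclusion is immediate upon applying homogeneity with $\lambda =\sqrt{xy}$:
\begin{equation*}
M\left( x,y\right) =M\left( \sqrt{xy}\,e^{t},\sqrt{xy}\,e^{-t}\right) =\sqrt{xy}\,M\left( e^{t},e^{-t}\right) .
\end{equation*}

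I do not anticipate any genuine obstacle, since the statement is essentially a restatement of degree-one homogeneity in exponential coordinates. The only point deserving a word of care is the standing hypothesis $x,y>0$, which legitimizes the square roots and the simplifications $\sqrt{x^{2}}=x$, $\sqrt{y^{2}}=y$; it is also worth making explicit that \textquotedblleft homogeneous mean\textquotedblright\ is understood here to mean homogeneous of degree one, for were $M$ homogeneous of some degree $p\neq 1$ the prefactor $\sqrt{xy}$ would instead become $\left( xy\right) ^{p/2}$.
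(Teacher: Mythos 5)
Your proof is correct and is precisely the intended argument: the paper states this lemma without proof, evidently regarding the identities $x=\sqrt{xy}\,e^{t}$, $y=\sqrt{xy}\,e^{-t}$ together with degree-one homogeneity as immediate, which is exactly what you verify. Your closing remark that ``homogeneous'' must mean homogeneous of degree one is a sensible clarification of the paper's implicit convention.
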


Using this lemma, almost all of inequalities for homogeneous symmetric
bivariate means can be transformed equivalently into the corresponding ones
for hyperbolic functions and vice versa. More specifically, let $L(x,y)$, $%
I(x,y)$ and $A_{r}(x,y)$ denote the logarithmic, identric and $r$-order
power means of two positive real numbers $x$ and $y$ with $x\neq y$ defined
by

\begin{eqnarray*}
L &=&L(x,y)=\frac{x-y}{\ln x-\ln y}\text{, \ \ \ }I=I(x,y)=e^{-1}\left( 
\frac{x^{x}}{y^{y}}\right) ^{1/\left( x-y\right) }\text{,} \\
A_{r} &=&A_{r}(x,y)=\left( \frac{x^{r}+y^{r}}{2}\right) ^{1/r}\text{ if }%
r\neq 0\text{ and }G=A_{0}(x,y)=\sqrt{xy},
\end{eqnarray*}%
respectively. Then we have 
\begin{equation*}
L\left( e^{t},e^{-t}\right) =\tfrac{\sinh t}{t}\text{, }I\left(
e^{t},e^{-t}\right) =e^{t\coth t-1}\text{, }A_{p}\left( e^{t},e^{-t}\right)
=\cosh ^{1/p}pt\text{, }G\left( e^{t},e^{-t}\right) =1,
\end{equation*}%
where $t=\frac{1}{2}\ln (x/y)>0$. And then, by Lemma \ref{Lemma a}, we can
obtain some inequalities for hyperbolic functions from certain known ones
for means mentioned previously. Here are some examples:

\begin{eqnarray}
A_{2/3} &<&I<A_{\ln 2}\Longrightarrow  \notag \\
\left( \cosh \frac{2t}{3}\right) ^{3/2} &<&e^{t\coth t-1}<\left( \cosh
\left( t\ln 2\right) \right) ^{1/\ln 2}\text{ \ (see \cite%
{Stolarsky.AMM.87.1980},\ \cite{Pittinger.680.1980}) }  \label{I-A_p}
\end{eqnarray}%
\begin{eqnarray}
A_{2/3} &<&I<\sqrt{8}e^{-1}A_{2/3}\Longrightarrow  \notag \\
\left( \cosh \frac{2t}{3}\right) ^{3/2} &<&e^{t\coth t-1}<\sqrt{8}%
e^{-1}\left( \cosh \frac{2t}{3}\right) ^{3/2}\text{ \ (see \cite%
{Yang.MIA.10.3.2007})}  \label{I-A_2/3}
\end{eqnarray}%
\begin{equation}
\sqrt{IG}<L<\frac{I+G}{2}\Longrightarrow \sqrt{e^{t\coth t-1}}<\frac{\sinh t%
}{t}<\frac{e^{t\coth t-1}+1}{2}\text{ \ (see \cite%
{Alzer.CRMPASC.9.11-16.1987}),}  \label{L-I-G}
\end{equation}%
\begin{equation}
I>\frac{L+A}{2}\Longrightarrow e^{t\coth t-1}>\frac{\frac{\sinh t}{t}+\cosh t%
}{2}\text{ \ (see \cite{Sandor.AA.40.1990})}  \label{L-I-A}
\end{equation}%
\begin{eqnarray}
\sqrt{AG} &<&\sqrt{LI}<\frac{L+I}{2}<\frac{A+G}{2}\Longrightarrow  \notag \\
\sqrt{\cosh t} &<&\sqrt{\tfrac{\sinh t}{t}e^{t\coth t-1}}<\tfrac{\frac{\sinh
t}{t}+e^{t\coth t-1}}{2}<\tfrac{\cosh t+1}{2}\text{ \ (see \cite%
{Alzer.AM.47.1986})}  \label{L-I-A-G}
\end{eqnarray}%
\begin{eqnarray}
\alpha A+(1-\alpha )G &<&I<\beta A+\left( 1-\beta \right) G\Longrightarrow 
\notag \\
\alpha \cosh t+(1-\alpha ) &<&e^{t\coth t-1}<\beta \cosh t+\left( 1-\beta
\right) \text{ \ (see \cite{Alzer.AM.80.2003})}  \label{I-A-G1}
\end{eqnarray}%
hold if and only if $\alpha \leq 2/3$ and $\beta \geq 2/e$.

\begin{eqnarray}
\left( \tfrac{2}{3}A^{p}+\tfrac{1}{3}G^{p}\right) ^{1/p} &<&I<\left( \tfrac{2%
}{3}A^{q}+\tfrac{1}{3}G^{q}\right) ^{1/q}\Longrightarrow  \notag \\
\left( \tfrac{2}{3}\cosh ^{p}t+\tfrac{1}{3}\right) ^{1/p} &<&e^{t\coth
t-1}<\left( \tfrac{2}{3}\cosh ^{q}t+\tfrac{1}{3}\right) ^{1/q}\text{ \ (see 
\cite{Kouba.JIPAM.9.3.2008})}  \label{I-A-G2}
\end{eqnarray}%
hold if and only if $p\leq 6/5$ and $q\geq \left( \log 3-\log 2\right)
/\left( 1-\log 2\right) $.

The aim of this paper is to give the versions of trigonometric functions
listed above and others. The rest of this paper is organized as follows. In
Section 2, we give some useful lemmas. Main results are contained in Section
3. In the last section, some inequalities for means corresponding to main
results are given.

\section{Lemmas}

\begin{lemma}[\protect\cite{Vamanamurthy.183.1994}, \protect\cite%
{Anderson.New York. 1997}]
\label{Lemma A}Let $f,g:\left[ a,b\right] \rightarrow \mathbb{R}$ be two
continuous functions which are differentiable on $\left( a,b\right) $.
Further, let $g^{\prime }\neq 0$ on $\left( a,b\right) $. If $f^{\prime
}/g^{\prime }$ is increasing (or decreasing) on $\left( a,b\right) $, then
so are the functions 
\begin{equation*}
t\mapsto \frac{f\left( t\right) -f\left( b\right) }{g\left( t\right)
-g\left( b\right) }\text{ \ \ \ and \ \ \ }t\mapsto \frac{f\left( t\right)
-f\left( a\right) }{g\left( t\right) -g\left( a\right) }.
\end{equation*}
\end{lemma}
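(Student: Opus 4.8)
The plan is to treat the two quotients symmetrically and to prove the claim for
$H(t)=\dfrac{f(t)-f(a)}{g(t)-g(a)}$; the argument for $t\mapsto\dfrac{f(t)-f(b)}{g(t)-g(b)}$ is the mirror image obtained by interchanging the roles of the two endpoints. It also suffices to treat the case in which $f'/g'$ is increasing, since replacing $f$ by $-f$ leaves $g$ and the denominators untouched and reduces the decreasing case to it.

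First I would check that everything is well defined. Since $g'$ is a derivative that never vanishes on $(a,b)$, Darboux's theorem (the intermediate-value property of derivatives) forces $g'$ to keep a constant sign there; hence $g$ is strictly monotone on $[a,b]$, so $g(t)-g(a)\neq 0$ for $t\in(a,b]$ and $g(t)-g(b)\neq 0$ for $t\in[a,b)$, and the two quotients make sense. Moreover $H$ is continuous on $(a,b]$ and differentiable on $(a,b)$.

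The heart of the argument is a single computation. Differentiating $H$ and clearing the positive denominator $\bigl(g(t)-g(a)\bigr)^2$, the sign of $H'(t)$ equals the sign of
\[
f'(t)\bigl(g(t)-g(a)\bigr)-\bigl(f(t)-f(a)\bigr)g'(t).
\]
The trick is to apply Cauchy's mean value theorem to $f,g$ on $[a,t]$: there is a point $\xi\in(a,t)$ with $f(t)-f(a)=\dfrac{f'(\xi)}{g'(\xi)}\bigl(g(t)-g(a)\bigr)$. Substituting this and factoring out $g'(t)\bigl(g(t)-g(a)\bigr)$ rewrites the displayed quantity as
\[
g'(t)\bigl(g(t)-g(a)\bigr)\left(\frac{f'(t)}{g'(t)}-\frac{f'(\xi)}{g'(\xi)}\right).
\]
Here $g'(t)\bigl(g(t)-g(a)\bigr)>0$ irrespective of the sign of $g'$ (both factors share that sign), while $\xi<t$ together with the monotonicity of $f'/g'$ makes the last parenthesis nonnegative. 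Thus $H'(t)\ge 0$ on $(a,b)$, and by continuity $H$ is increasing on $(a,b]$.

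I expect the main obstacle to be the sign bookkeeping rather than any deep idea: one must handle both possible signs of $g'$ uniformly, which is precisely what the product $g'(t)\bigl(g(t)-g(a)\bigr)$ achieves, and one must be sure the auxiliary point produced by Cauchy's theorem falls on the correct side of $t$ so that monotonicity of $f'/g'$ is applied with the right inequality. For the second quotient the same computation, with Cauchy's theorem now invoked on $[t,b]$, yields a point $\eta\in(t,b)$; there the product $g'(t)\bigl(g(t)-g(b)\bigr)$ is negative and the parenthesis is nonpositive, so the two sign reversals cancel and the orientation again comes out in favor of monotonicity.
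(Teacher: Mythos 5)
Your proof is correct. Note that the paper itself offers no proof of this lemma --- it is quoted from the cited references (Vamanamurthy--Vuorinen and Anderson--Vamanamurthy--Vuorinen) --- and your argument via Darboux's theorem, the Cauchy mean value theorem, and the sign of $g'(t)\bigl(g(t)-g(a)\bigr)$ is essentially the standard proof given in those sources, with the endpoint bookkeeping handled correctly.
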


\begin{lemma}[\protect\cite{Biernacki.9.1955}]
\label{Lemma B}Let $a_{n}$ and $b_{n}$ $(n=0,1,2,...)$ be real numbers and
let the power series $A\left( t\right) =\sum_{n=1}^{\infty }a_{n}t^{n}$ and $%
B\left( t\right) =\sum_{n=1}^{\infty }b_{n}t^{n}$ be convergent for $|t|<R$.
If $b_{n}>0$ for $n=0,1,2,...$, and $a_{n}/b_{n}$ is strictly increasing (or
decreasing) for $n=0,1,2,...$, then the function $A\left( t\right) /B\left(
t\right) $ is strictly increasing (or decreasing) on $\left( 0,R\right) $.
\end{lemma}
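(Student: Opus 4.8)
The plan is to reduce the monotonicity of the quotient $A/B$ to the sign of its derivative and then to establish that sign by a term-by-term (Cauchy product) computation. Since $b_{n}>0$ for all $n$ and $t\in(0,R)$, we have $B(t)>0$, so $A/B$ is well defined and
\[
\left(\frac{A}{B}\right)'(t)=\frac{A'(t)B(t)-A(t)B'(t)}{B(t)^{2}},
\]
and it suffices to prove $H(t):=A'(t)B(t)-A(t)B'(t)>0$ on $(0,R)$ in the increasing case (the decreasing case being identical with reversed inequalities). Because power series may be differentiated and multiplied term by term inside the disk of convergence, I would expand $H$ as a single power series and read off its coefficients.

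First I would write $A'(t)B(t)=\sum_{i,j}i\,a_{i}b_{j}\,t^{\,i+j-1}$ and $A(t)B'(t)=\sum_{i,j}j\,a_{i}b_{j}\,t^{\,i+j-1}$, where the indices $i,j$ range over the terms appearing in the series, so that
\[
H(t)=\sum_{i,j}(i-j)\,a_{i}b_{j}\,t^{\,i+j-1}=\sum_{s}d_{s}\,t^{\,s-1},\qquad d_{s}=\sum_{i+j=s}(i-j)\,a_{i}b_{j}.
\]
The key step is to symmetrize each coefficient $d_{s}$ by interchanging the summation indices $i$ and $j$; averaging the two resulting expressions gives
\[
2d_{s}=\sum_{i+j=s}(i-j)\bigl(a_{i}b_{j}-a_{j}b_{i}\bigr)=\sum_{i+j=s}(i-j)\,b_{i}b_{j}\!\left(\frac{a_{i}}{b_{i}}-\frac{a_{j}}{b_{j}}\right).
\]
Now each summand is nonnegative: when $i>j$ both factors $(i-j)$ and $\bigl(a_{i}/b_{i}-a_{j}/b_{j}\bigr)$ are positive because $a_{n}/b_{n}$ is strictly increasing, when $i<j$ both are negative, and when $i=j$ the summand vanishes; in all cases $b_{i}b_{j}>0$. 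Hence $d_{s}\ge 0$ for every $s$, and $d_{s}>0$ as soon as the index set $\{\,i+j=s\,\}$ contains a pair with $i\neq j$.

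It then follows that $H(t)$ is a series of nonnegative terms containing at least one strictly positive term for $t\in(0,R)$, so $H(t)>0$ and therefore $(A/B)'>0$, giving strict monotonicity. I expect the only genuine obstacle to be the middle step: recognizing that the double sum defining $d_{s}$ should be symmetrized so that the hypothesis on the monotone ratio $a_{n}/b_{n}$ can be invoked pairwise, and then verifying that \emph{strictness} is preserved, i.e.\ that some coefficient is honestly positive rather than merely nonnegative. The analytic prerequisites — term-by-term differentiation and the validity of the Cauchy product — are standard inside the common radius of convergence and need no further comment.
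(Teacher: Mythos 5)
Your proof is correct and complete: the reduction to the sign of $A'B-AB'$, the Cauchy-product expansion, and the symmetrization $2d_{s}=\sum_{i+j=s}(i-j)\,b_{i}b_{j}\left(\frac{a_{i}}{b_{i}}-\frac{a_{j}}{b_{j}}\right)$ with the pairwise sign argument is exactly the classical Biernacki--Krzy\.{z} argument, and your observation that $d_{s}>0$ whenever the index set $\{i+j=s\}$ contains an off-diagonal pair correctly secures strictness. The paper itself states this lemma only as a citation and offers no proof, so there is nothing to compare against; your argument stands on its own as the standard one.
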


\begin{lemma}[{\protect\cite[pp.227-229]{Handbook.math.1979}}]
\label{Lemma C}We have%
\begin{eqnarray}
\frac{1}{\sin t} &=&\frac{1}{t}+\sum_{n=1}^{\infty }\frac{2^{2n}-2}{\left(
2n\right) !}|B_{2n}|t^{2n-1}\text{,\ }|t|<\pi  \label{2.1} \\
\cot t &=&\frac{1}{t}-\sum_{n=1}^{\infty }\frac{2^{2n}}{\left( 2n\right) !}%
|B_{2n}|t^{2n-1}\text{, \ }|t|<\pi ,  \label{2.2} \\
\tan t &=&\sum_{n=1}^{\infty }\frac{2^{2n}-1}{\left( 2n\right) !}%
2^{2n}|B_{2n}|t^{2n-1}\text{, \ }|t|<\pi /2,  \label{2.3} \\
\frac{1}{\sin ^{2}t} &=&\frac{1}{t^{2}}+\sum_{n=1}^{\infty }\frac{\left(
2n-1\right) 2^{2n}}{\left( 2n\right) !}|B_{2n}|t^{2n-2}\text{, \ }|t|<\pi ,
\label{2.4}
\end{eqnarray}%
where $B_{n}$ is the Bernoulli number.
\end{lemma}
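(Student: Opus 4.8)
The plan is to treat equation (2.2) for $\cot t$ as the master expansion and to obtain the other three as elementary consequences of it. The natural starting point is the defining generating function of the Bernoulli numbers, $\frac{z}{e^{z}-1}=\sum_{n=0}^{\infty}\frac{B_{n}}{n!}z^{n}$, valid for $|z|<2\pi$. Since $B_{1}=-\tfrac12$ and $B_{2n+1}=0$ for $n\ge1$, adding $z/2$ symmetrizes the series into an even function, and the algebraic identity $\frac{z}{e^{z}-1}+\frac{z}{2}=\frac{z}{2}\coth\frac{z}{2}$ yields $u\coth u=1+\sum_{n=1}^{\infty}\frac{2^{2n}B_{2n}}{(2n)!}u^{2n}$ after the substitution $z=2u$.

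From here I would pass to the trigonometric side by setting $u=it$ and using $\cot t=i\coth(it)$. The factor $i^{2n-1}$ in each term, together with the contribution of the principal part $i\cdot\frac{1}{it}$, collapses to real quantities and produces $\cot t=\frac1t+\sum_{n=1}^{\infty}\frac{(-1)^{n}2^{2n}B_{2n}}{(2n)!}t^{2n-1}$. The final step for (2.2) is to invoke the sign law $(-1)^{n-1}B_{2n}>0$, i.e. $B_{2n}=(-1)^{n-1}|B_{2n}|$, which turns $(-1)^{n}B_{2n}$ into $-|B_{2n}|$ and gives exactly (2.2). The radius of convergence $\pi$ is forced by the constraint $|2it|<2\pi$ inherited from the generating function, equivalently by the location of the first nonzero pole of $\cot$.

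Given (2.2), the remaining three expansions follow by standard cotangent identities applied termwise. For (2.1) I would use $\frac{1}{\sin t}=\cot\frac{t}{2}-\cot t$; substituting the series for each cotangent, the principal parts combine as $\frac2t-\frac1t=\frac1t$, while the rescaling in $\cot\frac{t}{2}$ reduces the general coefficient to $\frac{2^{2n}-2}{(2n)!}|B_{2n}|$. For (2.3) I would use $\tan t=\cot t-2\cot 2t$; the substitution $t\mapsto 2t$ introduces a factor $2^{2n}$, the principal parts cancel, and the coefficients combine into $\frac{2^{2n}(2^{2n}-1)}{(2n)!}|B_{2n}|$, valid on $|t|<\pi/2$ since that is exactly where both $\cot t$ and $\cot 2t$ admit their expansions. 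Finally, (2.4) is the cleanest: since $\frac{1}{\sin^2 t}=-\frac{d}{dt}\cot t$, differentiating (2.2) term by term and negating gives the coefficient $\frac{(2n-1)2^{2n}}{(2n)!}|B_{2n}|$.

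The main obstacle is not any single computation but the careful bookkeeping that makes all four signs and integer factors come out as stated: one must track the powers of $i$ in the $\cot t$ reduction, and must know (or separately establish) that the even Bernoulli numbers strictly alternate in sign, so that the coefficients in (2.1)--(2.4) are manifestly positive. A secondary point requiring justification is the legitimacy of the termwise operations --- rearranging the combined cotangent series in (2.1) and (2.3), and differentiating in (2.4) --- which is guaranteed by the absolute and locally uniform convergence of power series strictly inside their disc of convergence; this also pins down the stated ranges $|t|<\pi$ and $|t|<\pi/2$.
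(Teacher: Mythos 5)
Your derivation is correct. The paper itself offers no proof of this lemma --- it is quoted verbatim from a handbook --- so there is nothing to compare against; your argument from the Bernoulli generating function $z/(e^{z}-1)=\sum B_{n}z^{n}/n!$, passing to $u\coth u$ and then to $\cot t$ via $\cot t=i\coth(it)$, and deriving the other three expansions from the identities $1/\sin t=\cot(t/2)-\cot t$, $\tan t=\cot t-2\cot 2t$, and $1/\sin^{2}t=-(\cot t)'$, is the standard route and every coefficient and range you report checks out. The only ingredient you lean on without proof is the sign law $B_{2n}=(-1)^{n-1}|B_{2n}|$, which you explicitly flag; it follows, for instance, from Euler's formula $B_{2n}=(-1)^{n+1}2(2n)!\,\zeta(2n)/(2\pi)^{2n}$, and citing that (or any equivalent fact) closes the argument completely.
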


\begin{lemma}
\label{Lemma ML1}For every $t\in \left( 0,\pi /2\right) $, $p\in \left( 0,1%
\right] $, the function $F_{p}$ defined by 
\begin{equation*}
F_{p}\left( t\right) =\frac{t\cot t-1}{\ln \cos pt}
\end{equation*}%
is increasing if $p\in (0,1/2]\ $and decreasing if $p\in \lbrack \sqrt{10}%
/5,1]$. Consequently, for $p\in (0,1/2]$ we have 
\begin{equation}
\frac{2}{3p^{2}}<\frac{t\cot t-1}{\ln \cos pt}<-\frac{1}{\ln \left( \cos
\left( \pi p/2\right) \right) }.  \label{ML1}
\end{equation}%
It is reversed if $p\in \lbrack \sqrt{10}/5,1]$.
\end{lemma}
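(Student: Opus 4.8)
The plan is to apply the monotonicity rule of Lemma~\ref{Lemma A} together with the power-series criterion of Lemma~\ref{Lemma B}. Write $f(t)=t\cot t-1$ and $g(t)=\ln\cos pt$, so that $F_p=f/g$. From \eqref{2.2} one has $\cot t=1/t-\sum_{n\ge1}\frac{2^{2n}}{(2n)!}|B_{2n}|t^{2n-1}$, hence $f(t)=-\sum_{n\ge1}\frac{2^{2n}}{(2n)!}|B_{2n}|t^{2n}$; in particular $f(0^{+})=g(0^{+})=0$, so $F_p(t)=\frac{f(t)-f(0)}{g(t)-g(0)}$. By Lemma~\ref{Lemma A} it therefore suffices to determine the monotonicity of the quotient $h:=f'/g'$.

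Differentiating $f$ termwise and using \eqref{2.3} for $g'=-p\tan pt$, I would obtain
\begin{equation*}
f'(t)=-\sum_{n=1}^{\infty}\frac{2n\,2^{2n}}{(2n)!}|B_{2n}|t^{2n-1},\qquad g'(t)=-\sum_{n=1}^{\infty}\frac{(2^{2n}-1)2^{2n}}{(2n)!}|B_{2n}|p^{2n}t^{2n-1}.
\end{equation*}
Cancelling the common factor $t$ and setting $s=t^{2}$ exhibits $h$ as a ratio of two power series in $s$ with positive denominator coefficients, so by Lemma~\ref{Lemma B} the monotonicity of $h$ in $t$ is governed by that of the coefficient quotient
\begin{equation*}
c_n=\frac{2n}{(2^{2n}-1)\,p^{2n}},\qquad \frac{c_{n+1}}{c_n}=\frac{1}{p^{2}}\cdot\frac{(n+1)(2^{2n}-1)}{n(2^{2n+2}-1)}=:\frac{r_n}{p^{2}}.
\end{equation*}

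The heart of the argument is to show that $r_n$ is strictly decreasing in $n$, with $r_1=2/5$ and $r_n\to 1/4$. Writing $x=4^{n}$ and clearing denominators, the inequality $r_{n+1}<r_n$ reduces after simplification to $16x^{2}-\bigl(8+9(n+1)^{2}\bigr)x+1>0$, which holds because $16\cdot4^{n}>8+9(n+1)^{2}$ for all $n\ge1$ (an elementary exponential-versus-polynomial estimate, readily checked by induction). Granting this, $\sup_n r_n=r_1=2/5$ and $\inf_n r_n=1/4$. Hence if $p\le 1/2$, i.e. $p^{2}\le 1/4<r_n$ for every $n$, then $c_n$ is strictly increasing and $F_p$ is increasing; if $p\ge\sqrt{10}/5$, i.e. $p^{2}\ge 2/5=r_1\ge r_n$, then $c_n$ is decreasing and $F_p$ is decreasing. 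I expect the only delicate point to be the boundary value $p=\sqrt{10}/5$, where $c_1=c_2$: strict monotonicity of $F_p$ still follows since $c_n$ is strictly decreasing for $n\ge 2$, so the coefficient sequence is not constant and Lemma~\ref{Lemma B} (or a limiting argument in $p$) applies.

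Finally, inequality \eqref{ML1} is obtained by reading off the endpoint values of the monotone function $F_p$. The limit as $t\to0^{+}$ equals $c_1=\frac{2}{3p^{2}}$, while substituting $t=\pi/2$ (where $\cot(\pi/2)=0$) gives $-\frac{1}{\ln\cos(\pi p/2)}$. For $p\in(0,1/2]$ the increasing function $F_p$ lies strictly between these two bounds, which is exactly \eqref{ML1}; for $p\in[\sqrt{10}/5,1]$ the function is decreasing and the two inequalities reverse.
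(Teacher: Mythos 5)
Your proposal is correct and follows essentially the same route as the paper: the same decomposition $f(t)=t\cot t-1$, $g(t)=\ln\cos pt$, the monotone form of l'H\^{o}pital's rule (Lemma \ref{Lemma A}) combined with the power-series criterion (Lemma \ref{Lemma B}), and the same coefficient ratio leading to the critical values $\sup_n r_n=r_1=2/5$ and $\inf_n r_n=1/4$. In fact you supply two details the paper glosses over --- an actual proof that $r_n$ is decreasing (via the reduction to $16x^{2}-(8+9(n+1)^{2})x+1>0$ with $x=4^{n}$, which I checked) and the boundary case $p=\sqrt{10}/5$ where $c_1=c_2$ --- so no changes are needed.
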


\begin{proof}
For $t\in \left( 0,\pi /2\right) $, we define $f_{1}\left( t\right) =t\cot
t-1$ and $f_{2}\left( t\right) =\ln \left( \cos pt\right) $, where $p\in
(0,1]$. Note that $f_{1}\left( 0^{+}\right) =f_{2}\left( 0^{+}\right) =0$,
then $F_{p}\left( t\right) $ can be written as 
\begin{equation*}
F_{p}\left( t\right) =\frac{f_{1}\left( t\right) -f_{1}\left( 0^{+}\right) }{%
f_{2}\left( t\right) -f_{2}\left( 0^{+}\right) }.
\end{equation*}%
Differentiation and using (\ref{2.1}) and (\ref{2.2}) yield 
\begin{eqnarray*}
\frac{f_{1}^{\prime }\left( t\right) }{f_{2}^{\prime }\left( t\right) } &=&%
\frac{\frac{t}{\sin ^{2}t}-\cot t}{p\tan pt}=\tfrac{t\left( \frac{1}{t^{2}}%
+\sum_{n=1}^{\infty }\frac{\left( 2n-1\right) 2^{2n}}{\left( 2n\right) !}%
|B_{2n}|t^{2n-2}\right) -\left( \frac{1}{t}-\sum_{n=1}^{\infty }\frac{2^{2n}%
}{\left( 2n\right) !}|B_{2n}|t^{2n-1}\right) }{\sum_{n=1}^{\infty }\frac{%
2^{2n}-1}{\left( 2n\right) !}2^{2n}p^{2n-1}|B_{2n}|t^{2n-1}} \\
&=&\frac{\sum_{n=1}^{\infty }\frac{2^{2n}}{\left( 2n\right) !}%
2n|B_{2n}|t^{2n-1}}{\sum_{n=1}^{\infty }\frac{2^{2n}-1}{\left( 2n\right) !}%
2^{2n}p^{2n}|B_{2n}|t^{2n-1}}:=\frac{\sum_{n=1}^{\infty }a_{n}t^{2n-1}}{%
\sum_{n=1}^{\infty }b_{n}t^{2n-1}}
\end{eqnarray*}%
where 
\begin{equation*}
a_{n}=\frac{2^{2n}}{\left( 2n\right) !}2n|B_{2n}|\text{, \ }b_{n}=\frac{%
2^{2n}-1}{\left( 2n\right) !}2^{2n}p^{2n}|B_{2n}|\text{.}
\end{equation*}%
Clearly, if the monotonicity of $a_{n}/b_{n}$ is proved, then by Lemma \ref%
{Lemma B} it is deduced the monotonicity of $f_{1}^{\prime }/f_{2}^{\prime }$%
, and then the monotonicity of the function $F_{p}$ easily follows from
Lemma \ref{Lemma A}. For this purpose, since $a_{n}$, $b_{n}>0$ for $n\in 
\mathbb{N}$, we only need to show that $b_{n}/a_{n}$ is decreasing if $%
0<p\leq 1/\sqrt{5}$ and increasing if $1/2\leq p\leq 1$. Indeed, elementary
computation yields%
\begin{eqnarray*}
\frac{b_{n+1}}{a_{n+1}}-\frac{b_{n}}{a_{n}} &=&\frac{1}{2n+2}p^{2n+2}\left(
2^{2n+2}-1\right) -\frac{1}{2n}p^{2n}\left( 2^{2n}-1\right) \\
&=&\frac{4^{n+1}-1}{2n+2}p^{2n}\left( p^{2}-\frac{n+1}{n}\frac{4^{n}-1}{%
4^{n+1}-1}\right) .
\end{eqnarray*}%
It is easy to obtain that for $n\in \mathbb{N}$ 
\begin{equation*}
\frac{b_{n+1}}{a_{n+1}}-\frac{b_{n}}{a_{n}}\left\{ 
\begin{array}{ll}
\geq 0 & \text{if }p^{2}>\max_{n\in \mathbb{N}}\left( \frac{n+1}{n}\frac{%
4^{n}-1}{4^{n+1}-1}\right) =\frac{2}{5}, \\ 
\leq 0 & \text{if }p^{2}\leq \min_{n\in \mathbb{N}}\left( \frac{n+1}{n}\frac{%
4^{n}-1}{4^{n+1}-1}\right) =\frac{1}{4},%
\end{array}%
\right.
\end{equation*}%
which proves the monotonicity of $a_{n}/b_{n}$.

By the monotonicity of the function $F_{p}$ and the facts that 
\begin{equation*}
F_{p}\left( 0^{+}\right) =\frac{2}{3p^{2}}\text{ \ \ \ and \ \ \ }%
F_{p}\left( \frac{\pi }{2}^{-}\right) =-\frac{1}{\ln \left( \cos \left( \pi
p/2\right) \right) },
\end{equation*}%
the inequalities (\ref{ML1}) and its reverse follow immediately.
\end{proof}

\begin{lemma}
\label{Lemma ML2}For every $t\in \left( 0,\pi /2\right) $, $p\in \left( 0,1%
\right] $, the function $G_{p}$ defined by 
\begin{equation*}
G_{p}\left( t\right) =\frac{\ln \frac{\sin t}{t}+t\cot t-1}{\ln \cos pt}
\end{equation*}%
is increasing if $p\in (0,1/2]$ and decreasing if $p\in \lbrack 1/\sqrt{3}%
,1] $. Consequently, for $p\in (0,1/2]$ we have 
\begin{equation}
\frac{1}{p^{2}}<\frac{\ln \frac{\sinh t}{t}+t\cot t-1}{\ln \cos pt}<\frac{%
\ln 2-\ln \pi -1}{\ln \left( \cos \left( \pi p/2\right) \right) }.
\label{ML2}
\end{equation}%
It is reversed if $p\in \lbrack 1/\sqrt{3},1]$.
\end{lemma}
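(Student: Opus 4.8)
The plan is to follow the proof of Lemma \ref{Lemma ML1} almost verbatim, changing only the numerator. I would set
\[
f_1(t)=\ln\frac{\sin t}{t}+t\cot t-1,\qquad f_2(t)=\ln\cos pt,
\]
both of which satisfy $f_1(0^+)=f_2(0^+)=0$, so that $G_p(t)=\dfrac{f_1(t)-f_1(0^+)}{f_2(t)-f_2(0^+)}$ and Lemma \ref{Lemma A} applies once the monotonicity of $f_1'/f_2'$ is settled. Differentiation gives $f_1'(t)=2\cot t-\tfrac1t-\tfrac{t}{\sin^2 t}$ and $f_2'(t)=-p\tan pt$, hence
\[
\frac{f_1'(t)}{f_2'(t)}=\frac{\tfrac1t+\tfrac{t}{\sin^2 t}-2\cot t}{p\tan pt}.
\]

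Next I would substitute the series \eqref{2.2}, \eqref{2.3}, \eqref{2.4} of Lemma \ref{Lemma C} into this quotient. The $1/t$ singularities cancel, and the coefficient of $t^{2n-1}$ in the numerator is $\bigl[(2n-1)+2\bigr]\tfrac{2^{2n}}{(2n)!}|B_{2n}|=(2n+1)\tfrac{2^{2n}}{(2n)!}|B_{2n}|$, so that $f_1'/f_2'=\bigl(\sum_{n\ge1}a_nt^{2n-1}\bigr)/\bigl(\sum_{n\ge1}b_nt^{2n-1}\bigr)$ with
\[
a_n=\frac{2^{2n}}{(2n)!}(2n+1)|B_{2n}|,\qquad b_n=\frac{2^{2n}-1}{(2n)!}2^{2n}p^{2n}|B_{2n}|.
\]
Since $a_n,b_n>0$, Lemma \ref{Lemma B} reduces the problem to the monotonicity in $n$ of $b_n/a_n=\dfrac{(4^n-1)p^{2n}}{2n+1}$. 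The routine difference computation factors as
\[
\frac{b_{n+1}}{a_{n+1}}-\frac{b_n}{a_n}=\frac{4^{n+1}-1}{2n+3}\,p^{2n}\left(p^2-\frac{2n+3}{2n+1}\cdot\frac{4^n-1}{4^{n+1}-1}\right),
\]
so the sign is governed entirely by comparing $p^2$ with $c_n:=\dfrac{2n+3}{2n+1}\cdot\dfrac{4^n-1}{4^{n+1}-1}$.

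The crux — and the only genuinely new estimate relative to Lemma \ref{Lemma ML1} — is to locate the extrema of $c_n$ over $n\in\mathbb{N}$, and I expect this to be the main obstacle, because $c_n$ is not monotone from the start: one checks directly that $c_1=c_2=\tfrac13$, after which $c_n$ strictly decreases to $\tfrac14$. I would establish $\max_n c_n=\tfrac13$ from these two explicit values together with the bound $c_n<\tfrac{2n+3}{2n+1}\cdot\tfrac14\le\tfrac13$, valid for $n\ge3$; and $\inf_n c_n=\tfrac14$ (not attained) from $4(2n+3)(4^n-1)>(2n+1)(4^{n+1}-1)$, equivalently $2\cdot4^{n+1}-(6n+11)>0$, which forces $c_n>\tfrac14$ with $c_n\to\tfrac14$. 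Consequently $p^2\le\tfrac14$ (i.e.\ $p\le\tfrac12$) makes $b_n/a_n$ strictly decreasing, hence $a_n/b_n$ increasing and $G_p$ increasing; while $p^2\ge\tfrac13$ (i.e.\ $p\ge1/\sqrt3$) makes $b_n/a_n$ increasing and $G_p$ decreasing. At the endpoint $p=1/\sqrt3$ the maximum is attained at $n=1,2$, giving weak monotonicity that is nonetheless strict from $n\ge3$ on, which still yields strict monotonicity of $G_p$ on the open interval.

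Finally I would read off the boundary values. A second-order expansion gives numerator $\sim-t^2/2$ and denominator $\sim-p^2t^2/2$, so $G_p(0^+)=1/p^2$; and at $t=\pi/2$ one has $\cot(\pi/2)=0$ and $\sin(\pi/2)=1$, whence the numerator equals $\ln(2/\pi)-1=\ln2-\ln\pi-1$ and $G_p(\pi/2^-)=\dfrac{\ln2-\ln\pi-1}{\ln\cos(\pi p/2)}$. Combining these endpoint values with the monotonicity established above delivers \eqref{ML2} for $p\in(0,1/2]$ and its reverse for $p\in[1/\sqrt3,1]$.
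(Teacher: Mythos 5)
Your proposal is correct and follows essentially the same route as the paper: the same decomposition $G_p=(g_1-g_1(0^+))/(g_2-g_2(0^+))$, the same series from Lemma \ref{Lemma C} leading to the coefficient ratio $b_n/a_n=(4^n-1)p^{2n}/(2n+1)$, the same factored difference governed by $c_n=\frac{2n+3}{2n+1}\cdot\frac{4^n-1}{4^{n+1}-1}$, and the same endpoint limits $1/p^2$ and $(\ln 2-\ln\pi-1)/\ln\cos(\pi p/2)$ via Lemmas \ref{Lemma A} and \ref{Lemma B}. Your verification that $\max_n c_n=1/3$ (attained at $n=1,2$) and $\inf_n c_n=1/4$, and your remark on the non-strict monotonicity at $p=1/\sqrt3$, are actually more careful than the paper, which merely asserts these extrema.
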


\begin{proof}
We define $g_{1}\left( t\right) =\ln \frac{\sin t}{t}+t\cot t-1$ and $%
g_{2}\left( t\right) =\ln \left( \cos pt\right) $, where $p\in (0,1]$. Note
that $g_{1}\left( 0^{+}\right) =g_{2}\left( 0^{+}\right) =0$, then $%
G_{p}\left( t\right) $ can be written as 
\begin{equation*}
G_{p}\left( t\right) =\frac{g_{1}\left( t\right) -g_{1}\left( 0^{+}\right) }{%
g_{2}\left( t\right) -g_{2}\left( 0^{+}\right) }.
\end{equation*}%
Differentiation and using (\ref{2.1}) and (\ref{2.2}) yield 
\begin{eqnarray*}
\frac{g_{1}^{\prime }\left( t\right) }{g_{2}^{\prime }\left( t\right) } &=&%
\frac{2\frac{\cos t}{\sin t}-\frac{1}{t}-t\frac{1}{\sin ^{2}t}}{-p\tan t} \\
&=&\tfrac{2\left( \frac{1}{t}-\sum_{n=1}^{\infty }\frac{2^{2n}}{\left(
2n\right) !}|B_{2n}|t^{2n-1}\right) -\frac{1}{t}-t\left( \frac{1}{t^{2}}%
+\sum_{n=1}^{\infty }\frac{\left( 2n-1\right) 2^{2n}}{\left( 2n\right) !}%
|B_{2n}|t^{2n-2}\right) }{-p\sum_{n=1}^{\infty }\frac{2^{2n}-1}{\left(
2n\right) !}2^{2n}|B_{2n}|p^{2n-1}t^{2n-1}} \\
&=&\frac{\sum_{n=1}^{\infty }\frac{2^{2n}}{\left( 2n\right) !}\left(
2n+1\right) |B_{2n}|t^{2n-1}}{\sum_{n=1}^{\infty }\frac{2^{2n}-1}{\left(
2n\right) !}2^{2n}p^{2n}|B_{2n}|t^{2n-1}}:=\frac{\sum_{n=1}^{\infty
}c_{n}t^{2n-1}}{\sum_{n=1}^{\infty }d_{n}t^{2n-1}}
\end{eqnarray*}%
where 
\begin{equation*}
c_{n}=\frac{2^{2n}}{\left( 2n\right) !}\left( 2n+1\right) |B_{2n}|\text{, \ }%
d_{n}=\frac{2^{2n}-1}{\left( 2n\right) !}2^{2n}p^{2n}|B_{2n}|\text{.}
\end{equation*}%
Similarly, we only need to show that $d_{n}/c_{n}$ is decreasing if $0<p\leq
1/\sqrt{5}$ and increasing if $1/2\leq p\leq 1$. Indeed, elementary
computation yields%
\begin{eqnarray*}
\frac{d_{n+1}}{c_{n+1}}-\frac{d_{n}}{c_{n}} &=&p^{2n+2}\frac{4^{n+1}-1}{2n+3}%
-p^{2n}\frac{4^{n}-1}{2n+1} \\
&=&\frac{4^{n+1}-1}{2n+3}p^{2n}\left( p^{2}-\frac{2n+3}{2n+1}\frac{4^{n}-1}{%
4^{n+1}-1}\right) .
\end{eqnarray*}%
It is easy to obtain that for $n\in \mathbb{N}$ 
\begin{equation*}
\frac{d_{n+1}}{c_{n+1}}-\frac{d_{n}}{c_{n}}\left\{ 
\begin{array}{ll}
\geq 0 & \text{if }p^{2}\geq \max_{n\in \mathbb{N}}\left( \frac{2n+3}{2n+1}%
\frac{4^{n}-1}{4^{n+1}-1}\right) =\frac{1}{3}, \\ 
\leq 0 & \text{if }p^{2}\leq \min_{n\in \mathbb{N}}\left( \frac{2n+3}{2n+1}%
\frac{4^{n}-1}{4^{n+1}-1}\right) =\frac{1}{4},%
\end{array}%
\right.
\end{equation*}%
which proves the monotonicity of $c_{n}/d_{n}$.

By the monotonicity of the function $F_{p}$ and the facts that 
\begin{equation*}
F_{p}\left( 0^{+}\right) =\frac{1}{p^{2}}\text{ \ \ \ and \ \ \ }F_{p}\left( 
\frac{\pi }{2}^{-}\right) =\frac{\ln 2-\ln \pi -1}{\ln \left( \cos \left(
\pi p/2\right) \right) },
\end{equation*}%
the inequalities (\ref{ML2}) and its reverse follow immediately.
\end{proof}

\begin{lemma}[\protect\cite{Yang.arxiv.1206.4911.2012}, \protect\cite%
{Klen.JIA.2010}]
\label{Lemma U,V}For $t\in \lbrack 0,\pi /2]$ and $p\in \lbrack 0,1)$, let $%
U_{p}\left( t\right) $ and $V_{p}\left( t\right) $ be defined by%
\begin{eqnarray*}
U_{p}\left( t\right) &=&\left( \cos pt\right) ^{1/p}\text{ if }p\neq 0\text{
and }U_{0}\left( t\right) =1, \\
V_{p}\left( t\right) &=&\left( \cos pt\right) ^{1/p^{2}}\text{ if }p\neq 0%
\text{ and }V_{0}\left( t\right) =e^{-t^{2}/2}.
\end{eqnarray*}%
Then both $U_{p}\left( t\right) $ and $V_{p}\left( t\right) $ are decreasing
with $p$ on $[0,1)$.
\end{lemma}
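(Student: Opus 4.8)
The plan is to reduce the monotonicity in $p$ to two elementary inequalities involving $\tan$ and $\ln\cos$. Since $U_p(t)$ and $V_p(t)$ are positive, it suffices to show that their logarithms decrease in $p$, and since both are of the form $\left(\cos pt\right)^{1/p^{s}}$ with $s=1$ and $s=2$ respectively, I would treat the two cases simultaneously. Fix $t\in(0,\pi/2)$ and set $W_{s}(p)=p^{-s}\ln\cos(pt)$, so that $\ln U_{p}(t)=W_{1}(p)$ and $\ln V_{p}(t)=W_{2}(p)$; the goal is to prove $W_{1}$ and $W_{2}$ are decreasing on $(0,1)$.

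First I would differentiate in $p$. A direct computation gives
\[
W_{s}'(p)=-s\,p^{-s-1}\ln\cos(pt)-t\,p^{-s}\tan(pt)=-p^{-s-1}\bigl[s\ln\cos(pt)+pt\tan(pt)\bigr].
\]
Since $p^{-s-1}>0$, the sign of $W_{s}'(p)$ is opposite to that of the bracket. Substituting $u=pt$, which ranges over $(0,\pi/2)$ because $p\in(0,1)$ and $t\in(0,\pi/2)$, the inequality $W_{s}'(p)\le 0$ becomes
\[
\phi_{s}(u):=u\tan u+s\ln\cos u\ge 0,\qquad u\in(0,\pi/2).
\]

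Next I would establish $\phi_{1}\ge 0$ and $\phi_{2}\ge 0$ by the usual device of checking the value at $0$ and the sign of the derivative. Both satisfy $\phi_{s}(0)=0$. For $s=1$, after the cancellation $\tan u-\tan u$ one finds $\phi_{1}'(u)=u\sec^{2}u>0$, so $\phi_{1}>0$ on $(0,\pi/2)$. For $s=2$ one gets $\phi_{2}'(u)=u\sec^{2}u-\tan u$, and multiplying by $\cos^{2}u$ reduces the question to the sign of $u-\tfrac12\sin 2u$, which is positive because $\sin 2u<2u$ for $u>0$. Hence $\phi_{2}>0$ as well, and both $W_{1}$ and $W_{2}$ are strictly decreasing on $(0,1)$.

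Finally I would settle the endpoint $p=0$ by continuity, using the Maclaurin series $\ln\cos(pt)=-\tfrac12 p^{2}t^{2}-\tfrac1{12}p^{4}t^{4}-\cdots$. This yields $W_{1}(0^{+})=0=\ln U_{0}(t)$ and $W_{2}(0^{+})=-t^{2}/2=\ln V_{0}(t)$, so the prescribed values at $p=0$ agree with the one-sided limits and the monotonicity extends to all of $[0,1)$. I expect the main obstacle to be the $s=2$ case: the inequality $\phi_{2}\ge 0$ is strictly sharper than $\phi_{1}\ge 0$, and the whole argument hinges on recognizing that $\phi_{2}'$ reduces cleanly to $u-\tfrac12\sin 2u$. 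One could alternatively route the monotonicity through the L'Hôpital-type criterion of Lemma \ref{Lemma A}, but the direct differentiation above is shorter and makes the two governing trigonometric inequalities transparent.
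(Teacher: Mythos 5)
Your argument is correct, and it is worth noting at the outset that the paper itself offers no proof of this lemma: it is imported wholesale from \cite{Yang.arxiv.1206.4911.2012} and \cite{Klen.JIA.2010}, so there is no internal proof to compare against. What you supply is a clean, self-contained elementary derivation. The computation $W_s'(p)=-p^{-s-1}\bigl[s\ln\cos(pt)+pt\tan(pt)\bigr]$ is right, the substitution $u=pt\in(0,\pi/2)$ legitimately reduces both cases to $\phi_s(u)=u\tan u+s\ln\cos u\ge 0$, and your derivative checks are accurate: $\phi_1'(u)=u\sec^2u>0$ after the cancellation, and $\cos^2u\,\phi_2'(u)=u-\tfrac12\sin 2u>0$ by $\sin x<x$. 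Together with $\phi_s(0)=0$ this gives strict decrease of $W_1,W_2$ on $(0,1)$ for each fixed $t\in(0,\pi/2]$, and your series expansion of $\ln\cos(pt)$ correctly identifies the prescribed values $U_0=1$ and $V_0=e^{-t^2/2}$ as the right-hand limits, so the monotonicity does extend to $[0,1)$ (the degenerate case $t=0$, where everything is constant, is harmless). Two small observations: since $\ln\cos u<0$, the inequality $\phi_2\ge 0$ immediately implies $\phi_1\ge 0$, so you could have proved only the $s=2$ case; and your remark that one could instead route the argument through Lemma \ref{Lemma A} is apt, since that is the style of machinery the paper uses elsewhere, but your direct differentiation is shorter and makes the two governing inequalities explicit.
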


\section{Main results}

\begin{theorem}
\label{Theorem M1}For $t\in \left( 0,\pi /2\right) $, the two-side inequality%
\begin{equation}
\left( \cos \frac{2t}{3}\right) ^{3/2}<e^{t\cot t-1}<\left( \cos
p_{1}t\right) ^{1/p_{1}}  \label{M1}
\end{equation}%
with the best constants $2/3$ and $p_{1}\approx 0.6505$, where $p_{1}$ is
the unique root of equation%
\begin{equation}
1+\frac{1}{p}\ln \left( \cos \frac{p\pi }{2}\right) =0  \label{Eq1}
\end{equation}%
on $\left( 0,1\right) $. Moreover we have 
\begin{eqnarray}
\left( \cos \frac{2t}{3}\right) ^{3/2} &<&e^{t\cot t-1}<\left( \cos \frac{2t%
}{3}\right) ^{1/\ln 2}<\frac{2\sqrt{2}}{e}\left( \cos \frac{2t}{3}\right)
^{3/2},  \label{M1a} \\
\left( \cos p_{1}t\right) ^{1/\left( 3p_{1}^{2}\right) } &<&e^{t\cot
t-1}<\left( \cos p_{1}t\right) ^{1/p_{1}},  \label{M1b}
\end{eqnarray}%
where the exponents $3/2$ and $1/\ln 2$ in (\ref{M1a}) are the best
constants, and $p_{1}\approx 0.6505$ in (\ref{M1b}) is also the best.
\end{theorem}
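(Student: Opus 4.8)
The plan is to reduce the entire theorem to the monotonicity of the single quotient $F_p$ from Lemma~\ref{Lemma ML1}, read off the two required bounds at the endpoints $t=0^+$ and $t=(\pi/2)^-$, and then extract the best constants from the threshold values of the parameter $p$. First I would note that for $t\in(0,\pi/2)$ and $p\in\{2/3,p_1\}$ one has $0<\cos pt<1$, hence $\ln\cos pt<0$; taking logarithms in \eqref{M1} and dividing by $\ln\cos pt$ (which reverses each inequality) shows that \eqref{M1} is \emph{equivalent} to the two one-sided statements $F_{2/3}(t)<\tfrac32$ and $F_{p_1}(t)>\tfrac1{p_1}$ on $(0,\pi/2)$.

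The key observation is that both exponents lie in the \emph{decreasing} regime of Lemma~\ref{Lemma ML1}: since $\sqrt{10}/5\approx0.632$, we have $2/3\approx0.667>\sqrt{10}/5$ and $p_1\approx0.651>\sqrt{10}/5$, so $F_{2/3}$ and $F_{p_1}$ are both decreasing. The left half of \eqref{M1} is then immediate from $F_{2/3}(t)<F_{2/3}(0^+)=\tfrac{2}{3(2/3)^2}=\tfrac32$, and the right half from $F_{p_1}(t)>F_{p_1}((\pi/2)^-)=-1/\ln\cos(p_1\pi/2)$, which equals $1/p_1$ exactly because $p_1$ is chosen to satisfy \eqref{Eq1}. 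To see that \eqref{Eq1} determines $p_1$ uniquely, I would appeal to Lemma~\ref{Lemma U,V}: the quantity $U_p(\pi/2)=(\cos(p\pi/2))^{1/p}$ is strictly decreasing in $p$, running from $U_0(\pi/2)=1$ down toward $0$ as $p\to1^-$, so it meets the level $e^{-1}$ exactly once; since $\tfrac1p\ln\cos(p\pi/2)=\ln U_p(\pi/2)$, that crossing is precisely the unique root of \eqref{Eq1}.

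For the best-constant assertions I would argue at the endpoints. For the upper bound, Lemma~\ref{Lemma U,V} again gives that $U_p(t)=(\cos pt)^{1/p}$ is decreasing in $p$, while $e^{t\cot t-1}\to e^{-1}$ as $t\to(\pi/2)^-$; hence any valid bound $e^{t\cot t-1}<U_p(t)$ forces $U_p((\pi/2)^-)\ge e^{-1}$, i.e.\ $p\le p_1$, so $p_1$ is the largest admissible exponent and therefore optimal. For the lower bound, $3/2=F_{2/3}(0^+)=\sup_t F_{2/3}(t)$, so no smaller exponent than $3/2$ (equivalently no lower-bound value larger than $(\cos\tfrac{2t}3)^{3/2}$) can hold near $t=0$, which gives the sharpness of $2/3$. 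The supplementary inequalities arise from the same mechanism applied at \emph{both} endpoints: with $p=2/3$ one gets $1/\ln2=-1/\ln\cos(\pi/3)=F_{2/3}((\pi/2)^-)<F_{2/3}(t)<F_{2/3}(0^+)=3/2$, which is exactly the first two inequalities of \eqref{M1a}; with $p=p_1$ the endpoint values $F_{p_1}(0^+)$ and $F_{p_1}((\pi/2)^-)=1/p_1$ supply the two exponents of \eqref{M1b}. The final inequality of \eqref{M1a} is purely elementary: writing $c=\cos(2t/3)\in(1/2,1)$ it becomes $c^{\,1/\ln2-3/2}<2^{3/2}/e$, and since $1/\ln2-3/2<0$ while $c>1/2$ with equality forced at $c=1/2$ (because $2^{3/2-1/\ln2}=2^{3/2}e^{-1}=2\sqrt2/e$), the strict bound holds throughout $(0,\pi/2)$.

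I expect the genuine obstacle to be the sharpness (best-constant) half rather than the inequalities themselves. The inequalities fall out as soon as one identifies the correct monotone endpoint of $F_p$, but establishing that $p_1$ cannot be improved needs the independent monotonicity input of Lemma~\ref{Lemma U,V} together with the endpoint limit $e^{t\cot t-1}\to e^{-1}$, and one must also verify at the outset that $p_1$ (like $2/3$) really lies in the decreasing regime $[\sqrt{10}/5,1]$, since Lemma~\ref{Lemma ML1} only delivers the one-sided bounds there.
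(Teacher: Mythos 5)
Your route is essentially the paper's: everything is reduced to Lemma~\ref{Lemma ML1} in the decreasing regime $p\in[\sqrt{10}/5,1]$ (and you correctly verify $2/3,p_1>\sqrt{10}/5$), the bounds are read off from the endpoint values $F_p(0^+)=\tfrac{2}{3p^2}$ and $F_p((\pi/2)^-)=-1/\ln\cos(p\pi/2)$, and Lemma~\ref{Lemma U,V} supplies both the unique root $p_1$ of \eqref{Eq1} and the optimality of $p_1$ (your argument for the latter, via $e^{t\cot t-1}\to e^{-1}$ and the monotonicity of $p\mapsto U_p$, is actually more explicit than the paper's ``it is clear that $p_1$ is the best''). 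Your proof of the last inequality in \eqref{M1a}, using $c=\cos\tfrac{2t}{3}>\tfrac12$ and $2^{3/2-1/\ln 2}=2\sqrt{2}/e$, is correct and cleaner than the paper's version. One bookkeeping remark: your derivation, like the paper's, produces the exponent $2/(3p_1^2)$ on the left of \eqref{M1b}; the printed $1/(3p_1^2)$ must be a typo, since with that exponent the left-hand side would tend to $e^{-1/(3p_1)}>e^{-1}$ as $t\to(\pi/2)^-$ and the inequality would fail.

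The one genuine gap is your argument for the sharpness of the constant $2/3$. You infer it from $3/2=F_{2/3}(0^+)=\sup_t F_{2/3}(t)$ together with the claim that ``no lower-bound value larger than $(\cos\tfrac{2t}{3})^{3/2}$ can hold near $t=0$.'' That claim is false: since $(\cos\tfrac{2t}{3})^{3/2}<e^{t\cot t-1}$ strictly, there is room above it, and for instance the average of the two sides is a strictly larger valid lower bound. What $\sup_t F_{2/3}=3/2$ actually proves is the optimality of the exponent $3/2$ for the \emph{fixed} base $\cos\tfrac{2t}{3}$ (the claim in \eqref{M1a}), not the optimality of $p=2/3$ within the family $(\cos pt)^{1/p}$, where base and exponent vary together with $p$; Lemma~\ref{Lemma U,V} tells you $U_p>U_{2/3}$ for $p<2/3$, but that alone does not rule $U_p$ out as a lower bound. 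The repair is a direct local comparison at the origin for variable $p$: the lower bound $(\cos pt)^{1/p}<e^{t\cot t-1}$ is equivalent to $\tfrac1p>F_p(t)$, and letting $t\to 0^+$ forces $\tfrac1p\geq F_p(0^+)=\tfrac{2}{3p^2}$, i.e.\ $p\geq\tfrac23$. This is exactly the paper's argument, carried out there via the expansion $t\cot t-1-\tfrac1p\ln\cos pt=\left(\tfrac{p}{2}-\tfrac13\right)t^2+O(t^4)$, whose $t^2$-coefficient is negative precisely when $p<\tfrac23$.
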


\begin{proof}
(i) We first prove the first inequality in (\ref{M1}) holds for $t\in \left(
0,\pi /2\right) $ and $2/3$ is the best constant. Letting $p=2/3\in \lbrack 
\sqrt{10}/5,1]$ in (\ref{ML1}) yields the first one in (\ref{M1}) and the
second one in (\ref{M1a}). If there is a $p<2/3$ such that $e^{t\cot
t-1}>\left( \cos pt\right) ^{1/p}$ holds for $t\in \left( 0,\pi /2\right) $,
then we have 
\begin{equation*}
\lim_{t\rightarrow 0^{+}}\frac{t\cot t-1-\frac{1}{p}\ln \left( \cos
pt\right) }{t^{2}}>0.
\end{equation*}%
Using power series extension gives%
\begin{equation*}
t\cot t-1-\frac{1}{p}\ln \left( \cos pt\right) =t^{2}\left( \frac{1}{2}p-%
\frac{1}{3}\right) +O\left( t^{4}\right) ,
\end{equation*}%
and therefore, 
\begin{equation*}
\lim_{t\rightarrow 0^{+}}\frac{t\cot t-1-\frac{1}{p}\ln \left( \cos
pt\right) }{t^{2}}=\frac{1}{2}\left( p-\frac{2}{3}\right) >0,
\end{equation*}%
which yields a contradiction. Hence, $2/3$ is the best possible.

(ii) By Lemma \ref{Lemma U,V}, it is seen that the function $p\mapsto 1+%
\frac{1}{p}\ln \left( \cos \frac{p\pi }{2}\right) $ is decreasing on $\left(
0,1\right) $, which together with the facts that%
\begin{equation*}
\lim_{p\rightarrow 0^{+}}\left( 1+\frac{1}{p}\ln \left( \cos \frac{p\pi }{2}%
\right) \right) =1\text{ \ and \ }\lim_{p\rightarrow 1^{-}}\left( 1+\frac{1}{%
p}\ln \left( \cos \frac{p\pi }{2}\right) \right) =-\infty
\end{equation*}%
reveals that the equation (\ref{Eq1}) has a unique root $p_{1}\in \left(
0,1\right) $. Numerical calculation gives $p_{1}\approx 0.6505$. Letting $%
p=p_{1}\in \lbrack \sqrt{10}/5,1]$ in Lemma \ref{Lemma ML1} yields 
\begin{equation*}
-\frac{1}{\ln \left( \cos \left( \pi p_{1}/2\right) \right) }<\frac{t\cot t-1%
}{\ln \left( \cos p_{1}t\right) }<\frac{2}{3p_{1}^{2}}.
\end{equation*}%
The first inequality above can be reduced as%
\begin{equation*}
\frac{2}{3p_{1}^{2}}\ln \left( \cos p_{1}t\right) <t\cot t-1<-\frac{1}{\ln
\left( \cos \left( \pi p_{1}/2\right) \right) }\ln \left( \cos p_{1}t\right)
=\frac{1}{p_{1}}\ln \left( \cos p_{1}t\right) ,
\end{equation*}%
that is, the second one in (\ref{M1}) and the first one in (\ref{M1b}),
where the equality is due to that $p_{1}$ is the unique root of equation (%
\ref{Eq1}). It is clear that $p_{1}$ is the best.

(iii) It remains to prove the third one in (\ref{M1a}). To this end, it
suffices to check that 
\begin{equation*}
\frac{1}{\ln 2}\ln \left( \cos \frac{2t}{3}\right) -\ln \frac{2\sqrt{2}}{e}-%
\frac{3}{2}\ln \left( \cos \frac{2t}{3}\right) <0.
\end{equation*}%
Since $\ln \left( \cos \frac{2t}{3}\right) >\ln \left( \cos \frac{\pi }{3}%
\right) =\ln 2$, the left-hand side is less than 
\begin{equation*}
\left( \frac{1}{\ln 2}-\frac{3}{2}\right) \ln 2-\ln \frac{2\sqrt{2}}{e}%
=2-3\ln 2<0,
\end{equation*}%
which proves the desired inequality.

This completes the proof.
\end{proof}

Yang has shown in \cite{Yang.arxiv.1206.5502.2012}\ that the inequalities 
\begin{equation*}
\left( \cos pt\right) ^{1/p}<\frac{\sin t}{t}<\left( \cos qt\right) ^{1/q}
\end{equation*}%
hold for $t\in (0,\pi /2)$ if and only if $p\in \lbrack p_{0},1)$ and $q\in
(0,1/3]$, where $p_{0}\approx 0.3473$. Hence, we have 
\begin{equation}
\cos t<\left( \cos \frac{t}{2}\right) ^{2}<\left( \cos p_{0}t\right)
^{1/p_{0}}<\frac{\sin t}{t}<\left( \cos \tfrac{t}{3}\right) ^{3}<\frac{%
2+\cos t}{3}.  \label{sint/t}
\end{equation}%
On the other hand, utilizing Theorem \ref{Theorem M1} and Lemma \ref{Lemma
U,V} we derive 
\begin{equation*}
\cos t<\left( \cos \frac{3t}{4}\right) ^{4/3}<\left( \cos \frac{2t}{3}%
\right) ^{3/2}<e^{t\cot t-1}<\left( \cos p_{1}t\right) ^{1/p_{1}}<\left(
\cos \frac{t}{2}\right) ^{2}<\left( \cos \frac{t}{3}\right) ^{3}<1,
\end{equation*}%
which in combination with (\ref{sint/t}) leads to the following

\begin{corollary}
\label{Corollary M1C}For $t\in \left( 0,\pi /2\right) $, the chain of
inequalities hold:%
\begin{equation}
\cos t<\left( \cos \tfrac{3t}{4}\right) ^{4/3}<\left( \cos \tfrac{2t}{3}%
\right) ^{3/2}<e^{t\cot t-1}<\left( \cos \tfrac{t}{2}\right) ^{2}<\frac{\sin
t}{t}<\left( \cos \tfrac{t}{3}\right) ^{3}<\tfrac{2+\cos t}{3}.  \label{M1c}
\end{equation}
\end{corollary}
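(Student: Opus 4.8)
The plan is to assemble (\ref{M1c}) by concatenating the two chains of inequalities displayed immediately before the corollary, both of which are already in hand. The first of these chains is precisely (\ref{sint/t}), which packages Yang's two-sided estimate $(\cos p_{0}t)^{1/p_{0}}<\sin t/t<(\cos(t/3))^{3}$ together with the monotonicity of $U_{p}$ and the elementary bound $(\cos(t/3))^{3}<(2+\cos t)/3$; the second chain is the one obtained from Theorem \ref{Theorem M1} and Lemma \ref{Lemma U,V}. The real task is therefore organisational: to exhibit a common node at which the two chains can be spliced.

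First I would reconstruct the second chain link by link. The middle links $(\cos(2t/3))^{3/2}<e^{t\cot t-1}<(\cos p_{1}t)^{1/p_{1}}$ are exactly (\ref{M1}). Every other link is an instance of the assertion in Lemma \ref{Lemma U,V} that $U_{p}(t)=(\cos pt)^{1/p}$ is strictly decreasing in $p$: writing $\cos t=U_{1}(t)$, $(\cos(3t/4))^{4/3}=U_{3/4}(t)$, $(\cos(2t/3))^{3/2}=U_{2/3}(t)$, $(\cos p_{1}t)^{1/p_{1}}=U_{p_{1}}(t)$, $(\cos(t/2))^{2}=U_{1/2}(t)$, $(\cos(t/3))^{3}=U_{1/3}(t)$ and $1=U_{0}(t)$, the required pure-$U$ orderings follow at once from the exponent inequalities $0<1/3<1/2<p_{1}<2/3<3/4<1$, where $p_{1}\approx 0.6505$ is supplied by Theorem \ref{Theorem M1}.

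The key observation is that both chains run through the common term $(\cos(t/2))^{2}=U_{1/2}(t)$: the reconstructed second chain yields $e^{t\cot t-1}<(\cos p_{1}t)^{1/p_{1}}<(\cos(t/2))^{2}$ (the second factor comparison using $p_{1}>1/2$ in Lemma \ref{Lemma U,V}), while (\ref{sint/t}) yields $(\cos(t/2))^{2}<(\cos p_{0}t)^{1/p_{0}}<\sin t/t$ (the first comparison using $p_{0}<1/2$). Splicing the two chains at this node and discarding the now-redundant intermediate terms $(\cos p_{1}t)^{1/p_{1}}$ and $(\cos p_{0}t)^{1/p_{0}}$ produces exactly the chain (\ref{M1c}).

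Because Theorem \ref{Theorem M1}, Lemma \ref{Lemma U,V} and (\ref{sint/t}) are all available, there is no genuine analytic obstacle and the argument is essentially bookkeeping. If one wished to make it self-contained, the only link not coming from monotonicity or from Theorem \ref{Theorem M1} is the rightmost inequality $(\cos(t/3))^{3}<(2+\cos t)/3$ inherited from (\ref{sint/t}); I would verify it via the triple-angle identity $\cos t=4\cos^{3}(t/3)-3\cos(t/3)$, which reduces the claim to $(c-1)^{2}(c+2)>0$ with $c=\cos(t/3)\in(\sqrt{3}/2,1)$, and this is evident.
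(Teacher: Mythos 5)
Your proposal is correct and follows essentially the same route as the paper: the paper likewise obtains the corollary by combining the chain (\ref{sint/t}) with the chain $\cos t<(\cos\tfrac{3t}{4})^{4/3}<(\cos\tfrac{2t}{3})^{3/2}<e^{t\cot t-1}<(\cos p_{1}t)^{1/p_{1}}<(\cos\tfrac{t}{2})^{2}<(\cos\tfrac{t}{3})^{3}<1$ derived from Theorem \ref{Theorem M1} and Lemma \ref{Lemma U,V}, splicing them at $(\cos\tfrac{t}{2})^{2}$ exactly as you describe. Your explicit check of $(\cos\tfrac{t}{3})^{3}<\tfrac{2+\cos t}{3}$ via the triple-angle identity is a harmless addition beyond what the paper records.
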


By Lemma \ref{Lemma ML1} and Lemma \ref{Lemma U,V} we also have

\begin{theorem}
\label{Theorem M2}For $t\in \left( 0,\pi /2\right) $, the two-side inequality%
\begin{equation}
\left( \cos pt\right) ^{2/\left( 3p^{2}\right) }<e^{t\cot t-1}<\left( \cos
qt\right) ^{2/\left( 3q^{2}\right) }  \label{M2}
\end{equation}%
holds if $p\in \lbrack \sqrt{10}/5,1]$ and $q\in (0,1/2]$.

In particular, letting $p=2/3,q=1/2,1/3,0^{+}$, we have%
\begin{equation}
\left( \cos \frac{2t}{3}\right) ^{3/2}<e^{t\cot t-1}<\left( \cos \frac{t}{2}%
\right) ^{8/3}<\left( \cos \frac{t}{3}\right) ^{6}<e^{-t^{2}/3}.  \label{M2b}
\end{equation}%
Taking $p=1/2$ in (\ref{ML1}) leads to 
\begin{equation}
\left( \cos \frac{t}{2}\right) ^{2/\ln 2}<e^{t\cot t-1}<\left( \cos \frac{t}{%
2}\right) ^{8/3},  \label{M2a}
\end{equation}%
where the exponents $2/\ln 2$ and $8/3$ are the best possible.
\end{theorem}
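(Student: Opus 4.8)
The plan is to read off the entire statement from Lemma \ref{Lemma ML1}, supplemented by the monotonicity in Lemma \ref{Lemma U,V}, so that essentially no new estimate is required; the only care needed is in tracking the sign of $\ln\cos pt$ and in evaluating a couple of boundary limits. The main inequality (\ref{M2}) is a direct restatement of Lemma \ref{Lemma ML1}, while the displayed chains (\ref{M2b}) and (\ref{M2a}) come from specialization plus the ordering of the functions $V_p$.

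First I would establish (\ref{M2}). For $t\in(0,\pi/2)$ and $p\in(0,1]$ we have $\cos pt<1$, so $\ln\cos pt<0$, and likewise $t\cot t-1<0$; hence $F_p(t)=(t\cot t-1)/\ln\cos pt>0$. For the lower bound take $p\in[\sqrt{10}/5,1]$: by Lemma \ref{Lemma ML1}, $F_p$ is decreasing, so $F_p(t)<F_p(0^+)=2/(3p^2)$. Multiplying through by the \emph{negative} quantity $\ln\cos pt$ reverses the inequality, giving $t\cot t-1>\tfrac{2}{3p^2}\ln\cos pt$, and exponentiating yields $e^{t\cot t-1}>(\cos pt)^{2/(3p^2)}$. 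For the upper bound take $q\in(0,1/2]$: now $F_q$ is increasing, so $F_q(t)>F_q(0^+)=2/(3q^2)$, and the same sign manipulation produces $e^{t\cot t-1}<(\cos qt)^{2/(3q^2)}$. This is precisely (\ref{M2}).

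Next I would specialize to obtain (\ref{M2b}). Putting $p=2/3$ (which lies in $[\sqrt{10}/5,1]$) in the lower bound and $q=1/2,\,1/3$ in the upper bound of (\ref{M2}) gives the first three members, since $2/(3(2/3)^2)=3/2$, $2/(3(1/2)^2)=8/3$ and $2/(3(1/3)^2)=6$. The remaining links $(\cos\tfrac t2)^{8/3}<(\cos\tfrac t3)^{6}<e^{-t^2/3}$ I would get from Lemma \ref{Lemma U,V}: writing each term as $V_p(t)^{2/3}$ with $V_p(t)=(\cos pt)^{1/p^2}$ for $p=1/2,\,1/3,\,0$ (and $V_0(t)=e^{-t^2/2}$), the stated order is exactly the monotonicity $V_{1/2}<V_{1/3}<V_0$ raised to the positive power $2/3$; thus the endpoint $q=0^+$ in (\ref{M2}) corresponds to $e^{-t^2/3}$.

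Finally I would treat (\ref{M2a}) together with its sharpness. Setting $p=1/2$ directly in (\ref{ML1}) gives $F_{1/2}(0^+)=8/3$ and, at the other endpoint, $F_{1/2}((\pi/2)^-)=-1/\ln\cos(\pi/4)=2/\ln 2$; the same sign bookkeeping converts $8/3<F_{1/2}(t)<2/\ln 2$ into (\ref{M2a}). For optimality I would note that $e^{t\cot t-1}<(\cos\tfrac t2)^{\alpha}$ is equivalent to $F_{1/2}(t)>\alpha$ and $e^{t\cot t-1}>(\cos\tfrac t2)^{\beta}$ to $F_{1/2}(t)<\beta$; since $F_{1/2}$ is strictly increasing, with $8/3$ and $2/\ln 2$ as its (unattained) infimum at $0^+$ and supremum at $(\pi/2)^-$, neither exponent can be improved. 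I expect the only genuinely delicate points to be the correct evaluation of these two boundary limits and the consistent handling of the sign reversal when dividing by $\ln\cos pt$; everything else is a direct quotation of the lemmas.
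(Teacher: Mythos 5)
Your proposal is correct and follows exactly the route the paper intends: the paper gives no separate proof of Theorem \ref{Theorem M2}, introducing it only with ``By Lemma \ref{Lemma ML1} and Lemma \ref{Lemma U,V} we also have,'' and your write-up simply makes explicit the sign bookkeeping for $\ln\cos pt<0$, the endpoint limits $F_p(0^+)=2/(3p^2)$ and $F_{1/2}((\pi/2)^-)=2/\ln 2$, and the identification $(\cos pt)^{2/(3p^2)}=V_p(t)^{2/3}$ that orders the tail of (\ref{M2b}). Nothing is missing.
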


Replacing $t$, $p$, $q$ by $t/2$, $2p$, $2q$ in Theorem \ref{Theorem M2} and
next taking squares, we get

\begin{corollary}
\label{Corollary M2c1}For $t\in \left( 0,\pi \right) $, the two-side
inequality%
\begin{equation}
\left( \cos pt\right) ^{1/\left( 3p^{2}\right) }<e^{t\cot \left( t/2\right)
-2}<\left( \cos qt\right) ^{1/\left( 3q^{2}\right) }  \label{M2c1}
\end{equation}%
holds if $p\in \lbrack 1/\sqrt{10},1/2]$ and $q\in (0,1/4]$. Moreover, we
have 
\begin{eqnarray}
\left( \cos \frac{t}{3}\right) ^{1/3} &<&e^{t\cot \left( t/2\right)
-2}<\left( \cos \frac{t}{4}\right) ^{16/3}<\left( \cos \frac{t}{6}\right)
^{12}<e^{-t^{2}/6},  \label{M2c1b} \\
\left( \cos \frac{t}{4}\right) ^{4/\ln 2} &<&e^{t\cot \left( t/2\right)
-2}<\left( \cos \frac{t}{4}\right) ^{16/3}.  \label{M2c1a}
\end{eqnarray}
\end{corollary}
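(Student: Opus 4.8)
The plan is to derive the corollary formally from Theorem~\ref{Theorem M2}, with no fresh analysis: every assertion in the corollary is the image, under one and the same transformation, of a statement already proved. That transformation takes an inequality valid for a variable $s\in(0,\pi/2)$, replaces $s$ by $t/2$, and then squares all of its members. Under it the central object transforms as $e^{s\cot s-1}\mapsto e^{(t/2)\cot(t/2)-1}\mapsto e^{t\cot(t/2)-2}$, and since every term that occurs is strictly positive, the squaring step preserves both the inequalities and their direction. Because $s=t/2\in(0,\pi/2)$ is equivalent to $t\in(0,\pi)$, the range of validity becomes exactly the interval $(0,\pi)$ claimed.

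For the general two-sided inequality I would apply this to (\ref{M2}) after renaming its parameters $P,Q$ and setting $P=2p$, $Q=2q$. The admissible sets transform equivalently: $P\in[\sqrt{10}/5,1]$ becomes $p\in[1/\sqrt{10},1/2]$ (using $\sqrt{10}/10=1/\sqrt{10}$) and $Q\in(0,1/2]$ becomes $q\in(0,1/4]$, which are precisely the stated hypotheses. The arguments simplify as $\cos(Ps)=\cos(2p\cdot t/2)=\cos(pt)$, and the exponents collapse as $2/(3P^2)=1/(6p^2)$, so the substitution yields
\begin{equation*}
(\cos pt)^{1/(6p^2)}<e^{(t/2)\cot(t/2)-1}<(\cos qt)^{1/(6q^2)}.
\end{equation*}
Squaring now multiplies each exponent by two, turning $1/(6p^2)$ and $1/(6q^2)$ into $1/(3p^2)$ and $1/(3q^2)$ and the centre into $e^{t\cot(t/2)-2}$; this is exactly (\ref{M2c1}).

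The explicit chains (\ref{M2c1b}) and (\ref{M2c1a}) require nothing new: I would apply the very same substitution-and-squaring directly to the numerical chains (\ref{M2b}) and (\ref{M2a}). Under it the coefficient of $t$ inside each cosine is halved, every exponent is doubled, and the limiting member $e^{-t^2/3}$ of (\ref{M2b}) becomes $e^{-t^2/6}$; in particular the sharp exponents $2/\ln 2$ and $8/3$ of (\ref{M2a}) map to $4/\ln 2$ and $16/3$ in (\ref{M2c1a}). These remain best possible because the transformation is invertible---apply $t\mapsto 2t$ and take square roots---so any sharpening would pull back to a sharpening of the already-optimal (\ref{M2a}). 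I do not anticipate a genuine obstacle here; the whole argument is bookkeeping, and the only points deserving an explicit line are the positivity that legitimizes squaring and the equivalence of the parameter ranges under $P=2p$, $Q=2q$.
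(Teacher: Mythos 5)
Your argument is exactly the paper's: the corollary is obtained by replacing $t$, $p$, $q$ with $t/2$, $2p$, $2q$ in Theorem \ref{Theorem M2} (and in the displayed chains (\ref{M2b}) and (\ref{M2a})) and then squaring, and your bookkeeping of the parameter ranges and exponents is correct. One small point: your computation yields $\left(\cos\frac{t}{3}\right)^{3}$ as the first member of (\ref{M2c1b}), not the paper's $\left(\cos\frac{t}{3}\right)^{1/3}$, which appears to be a typo since $1/(3p^{2})=3$ at $p=1/3$; your version is the one that actually follows (and the printed one fails near $t=0$).
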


\begin{theorem}
\label{Theorem M3}For $t\in \left( 0,\pi /2\right) $, we have%
\begin{eqnarray}
\cos ^{2/3}t &<&\frac{2}{3}\cos t+\frac{1}{3}<\left( \cos \frac{2t}{3}%
\right) ^{3/2},  \label{M3a} \\
\left( \cos \frac{t}{2}\right) ^{4/3} &<&\frac{\sin t}{t}<\frac{2\cos \frac{t%
}{2}+\cos ^{2}\frac{t}{2}}{3}<\left( \frac{2}{3}\cos \frac{t}{2}+\frac{1}{3}%
\right) ^{2}<\cos ^{3}\frac{t}{3}.  \label{M3b}
\end{eqnarray}
\end{theorem}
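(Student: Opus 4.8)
The plan is to treat the two chains separately, isolating the two genuinely transcendental comparisons and reducing everything else either to algebra or to the already-established chain \eqref{sint/t}.

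For \eqref{M3a}, the left inequality $\cos^{2/3}t<\tfrac23\cos t+\tfrac13$ is immediate from the weighted arithmetic--geometric mean inequality applied to $\cos t$ and $1$ with weights $2/3$ and $1/3$; strictness holds because $\cos t<1$ on $(0,\pi/2)$. The substantive step is the right inequality $\tfrac23\cos t+\tfrac13<(\cos\tfrac{2t}{3})^{3/2}$. I would prove it by setting $\psi(t)=\tfrac32\ln\cos\tfrac{2t}{3}-\ln(\tfrac23\cos t+\tfrac13)$, noting $\psi(0^+)=0$, and showing $\psi'>0$. Differentiation gives $\psi'(t)=-\tan\tfrac{2t}{3}+\tfrac{2\sin t}{2\cos t+1}$, so $\psi'(t)>0$ is equivalent, after clearing the positive factors $(2\cos t+1)\cos\tfrac{2t}{3}$, to $2\sin t\cos\tfrac{2t}{3}>(2\cos t+1)\sin\tfrac{2t}{3}$. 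Expanding both sides by the product-to-sum formulas collapses this to $2\sin\tfrac{t}{3}>\sin\tfrac{2t}{3}$, that is, to $\cos\tfrac{t}{3}<1$, which is trivially true. Hence $\psi$ increases from $0$ and the inequality follows.

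For \eqref{M3b} I would dispatch the four links in turn. The left link $(\cos\tfrac t2)^{4/3}<\tfrac{\sin t}{t}$ is, via $\tfrac{\sin t}{t}=\tfrac{\sin s}{s}\cos s$ with $s=t/2$, equivalent to $(\cos s)^{1/3}<\tfrac{\sin s}{s}$; setting $\phi(s)=\ln\tfrac{\sin s}{s}-\tfrac13\ln\cos s$ with $\phi(0^+)=0$, a short computation reduces $\phi'(s)>0$ to the numerator condition $s(2+\cos 2s)>\tfrac32\sin 2s$, which is exactly $\tfrac{\sin 2s}{2s}<\tfrac{2+\cos 2s}{3}$---the last inequality of \eqref{sint/t} at argument $2s$. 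The second link $\tfrac{\sin t}{t}<\tfrac{2\cos\frac t2+\cos^2\frac t2}{3}$ becomes, after the same half-angle identity and cancelling $\cos\tfrac t2$, precisely $\tfrac{\sin s}{s}<\tfrac{2+\cos s}{3}$, again the last inequality of \eqref{sint/t}. The third link reduces, with $u=\cos\tfrac t2$, to $3(2u+u^2)<(2u+1)^2$, i.e.\ $(u-1)^2>0$. Finally, the fourth link $(\tfrac23\cos\tfrac t2+\tfrac13)^2<\cos^3\tfrac t3$ is obtained by squaring the right inequality of \eqref{M3a} (both sides positive) and replacing $t$ by $t/2$.

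The main obstacle is the right inequality of \eqref{M3a}: unlike the others it does not reduce to \eqref{sint/t} or to pure algebra, and the logarithmic-derivative device is what makes it tractable. Its payoff is twofold---it supplies the fourth link of \eqref{M3b} after rescaling, and, combined with the lower bound of Theorem \ref{Theorem M1}, it realizes the trigonometric analogue of $\tfrac23A+\tfrac13G<I$. The remaining care is only in verifying the boundary limits $\psi(0^+)=\phi(0^+)=0$ and the positivity of the denominators on $(0,\pi/2)$, both routine.
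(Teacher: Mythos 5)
Your proposal is correct, and for the substantive parts it coincides with the paper's own argument. For the second inequality of \eqref{M3a} the paper studies $f(t)=\left(\tfrac23\cos t+\tfrac13\right)\left(\cos\tfrac{2t}{3}\right)^{-3/2}$ and shows $f'(t)=-\tfrac23\left(\cos\tfrac{2t}{3}\right)^{-5/2}\sin\tfrac t3\left(1-\cos\tfrac t3\right)<0$, so $f<f(0)=1$; your logarithmic variant $\psi=\tfrac32\ln\cos\tfrac{2t}{3}-\ln\left(\tfrac23\cos t+\tfrac13\right)$ is the same monotonicity argument, and your product-to-sum reduction to $\cos\tfrac t3<1$ is exactly the factorization hiding in the paper's $\sin\tfrac t3\left(1-\cos\tfrac t3\right)$. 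The third and fourth links of \eqref{M3b} are handled identically in both (the perfect-square identity, and squaring \eqref{M3a} after the substitution $t\mapsto t/2$). The one genuine divergence is the first two links of \eqref{M3b}: the paper simply cites them from Neuman and S\'andor, whereas you derive both from Cusa's inequality $\tfrac{\sin s}{s}<\tfrac{2+\cos s}{3}$ (the last link of \eqref{sint/t}) via the half-angle identity $\tfrac{\sin t}{t}=\tfrac{\sin s}{s}\cos s$ with $s=t/2$ --- the second link by cancelling $\cos s$, the first by the logarithmic-derivative computation $\cot s-\tfrac1s+\tfrac13\tan s=\tfrac{2(2+\cos 2s)}{3\sin 2s}-\tfrac1s>0$. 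Your route buys self-containedness (everything rests on \eqref{sint/t}, already stated in the paper) at the cost of two short extra computations; the paper's citation is shorter but imports an external result. Both are valid.
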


\begin{proof}
(i) The firs inequality in (\ref{M3a}) is from the basic inequality. To
prove the second one in (\ref{M3a}), we define 
\begin{equation*}
f\left( t\right) =\left( \frac{2}{3}\cos t+\frac{1}{3}\right) \left( \cos 
\frac{2t}{3}\right) ^{-3/2}.
\end{equation*}%
Differentiation and simplifying leads to%
\begin{equation*}
f^{\prime }\left( t\right) =-\frac{2}{3}\left( \cos \frac{2t}{3}\right)
^{-5/2}\left( \sin \frac{t}{3}\right) \left( 1-\cos \frac{t}{3}\right) <0.
\end{equation*}%
Therefore we have $f\left( t\right) <f\left( 0\right) =1$, which proves (\ref%
{M3a}).

(ii) Now we prove (\ref{M3b}). The first and second ones are due to Neuman
and S\'{a}ndor \cite[(2.5)]{Neuman.MIA.13.4.2010}. The third one easily
follows from%
\begin{equation*}
\frac{2\cos \frac{t}{2}+\cos ^{2}\frac{t}{2}}{3}-\left( \frac{2}{3}\cos 
\frac{t}{2}+\frac{1}{3}\right) ^{2}=-\frac{1}{9}\left( \cos \frac{t}{2}%
-1\right) ^{2}<0.
\end{equation*}%
Replacing $2t$ by $t$ and taking square values in the second one of (\ref%
{M3a}) yields that the fourth one holds for $t\in \left( 0,\pi \right) $.

Thus the proof is complete.
\end{proof}

Taking square roots for each sides in (\ref{M3a}) and the first and second
inequalities of (\ref{M2b}), we get%
\begin{equation*}
\left( \cos t\right) ^{1/3}<\sqrt{\tfrac{2}{3}\cos t+\tfrac{1}{3}}<\left(
\cos \tfrac{2t}{3}\right) ^{3/4}<\sqrt{e^{t\cot t-1}}<\left( \cos \tfrac{t}{2%
}\right) ^{4/3}.
\end{equation*}%
This in combination with (\ref{M3b}), (\ref{M2c1b}) and%
\begin{equation*}
e^{-t^{2}/6}<\frac{2+\cos t}{3}\text{ \ for }t\in \left( 0,\infty \right)
\end{equation*}%
proved in \cite[Theorem 2]{Yang.arxiv.1206.4911.2012}, \cite{Yang.JMI.2013}
leads to the following interesting chain of inequalities for trigonometric
functions.

\begin{corollary}
\label{Corollary M23c}For $t\in \left( 0,\pi /2\right) $, we have%
\begin{eqnarray}
\left( \cos t\right) ^{1/3} &<&\sqrt{\tfrac{2}{3}\cos t+\tfrac{1}{3}}<\left(
\cos \tfrac{2t}{3}\right) ^{3/4}<\sqrt{e^{t\cot t-1}}<  \label{M23c} \\
\left( \cos \tfrac{t}{2}\right) ^{4/3} &<&\frac{\sin t}{t}<\frac{2\cos \frac{%
t}{2}+\cos ^{2}\frac{t}{2}}{3}<\left( \tfrac{2}{3}\cos \tfrac{t}{2}+\tfrac{1%
}{3}\right) ^{2}<  \notag \\
\left( \cos \tfrac{t}{3}\right) ^{3} &<&e^{t\cot \frac{t}{2}-2}<\left( \cos 
\tfrac{t}{4}\right) ^{16/3}<\left( \cos \tfrac{t}{6}\right)
^{12}<e^{-t^{2}/6}<\tfrac{2}{3}+\tfrac{1}{3}\cos t.  \notag
\end{eqnarray}
\end{corollary}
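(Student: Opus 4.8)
The plan is to realize the chain (\ref{M23c}) purely as a concatenation, by transitivity, of inequalities already proved; no essentially new estimate is needed, only the bookkeeping of matching endpoints. I would first split (\ref{M23c}) into four consecutive blocks according to their origin. The opening block
\[
\left( \cos t\right) ^{1/3}<\sqrt{\tfrac{2}{3}\cos t+\tfrac{1}{3}}<\left( \cos \tfrac{2t}{3}\right) ^{3/4}<\sqrt{e^{t\cot t-1}}<\left( \cos \tfrac{t}{2}\right) ^{4/3}
\]
is obtained by taking square roots throughout (\ref{M3a}) and across the first two members of (\ref{M2b}). Since every term is positive on $\left( 0,\pi /2\right) $, the square-root map is increasing and preserves the inequalities, halving the exponents ($2/3\mapsto 1/3$, $3/2\mapsto 3/4$, $8/3\mapsto 4/3$); this is precisely the display recorded just before the corollary, and its shared middle term $\left( \cos \tfrac{2t}{3}\right) ^{3/4}$ glues the two halves together.

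The second block is nothing but (\ref{M3b}) verbatim,
\[
\left( \cos \tfrac{t}{2}\right) ^{4/3}<\frac{\sin t}{t}<\frac{2\cos \frac{t}{2}+\cos ^{2}\frac{t}{2}}{3}<\left( \tfrac{2}{3}\cos \tfrac{t}{2}+\tfrac{1}{3}\right) ^{2}<\cos ^{3}\tfrac{t}{3},
\]
whose left endpoint $\left( \cos \tfrac{t}{2}\right) ^{4/3}$ coincides with the right endpoint of the first block, so it requires no further work. The fourth block is the single inequality $e^{-t^{2}/6}<\frac{2+\cos t}{3}=\frac{2}{3}+\frac{1}{3}\cos t$, which I would cite directly from \cite[Theorem 2]{Yang.arxiv.1206.4911.2012}, \cite{Yang.JMI.2013}; its left endpoint matches the right endpoint $e^{-t^{2}/6}$ of the tail $\left( \cos \tfrac{t}{6}\right) ^{12}<e^{-t^{2}/6}$ of the third block, which in turn is taken from (\ref{M2c1b}).

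The one seam that does not close on its own — and hence the main, if modest, obstacle — is the junction between the second and third blocks. The second block ends at $\cos ^{3}\tfrac{t}{3}=\left( \cos \tfrac{t}{3}\right) ^{3}$, while (\ref{M2c1b}) begins instead at $\left( \cos \tfrac{t}{3}\right) ^{1/3}<e^{t\cot \left( t/2\right) -2}$, a different exponent. I would bridge this with the elementary observation that $x^{a}$ is decreasing in $a$ for a fixed $x\in \left( 0,1\right) $: since $\tfrac{t}{3}\in \left( 0,\pi /6\right) $ forces $\cos \tfrac{t}{3}\in \left( 0,1\right) $ and $3>1/3$, we get $\left( \cos \tfrac{t}{3}\right) ^{3}<\left( \cos \tfrac{t}{3}\right) ^{1/3}$, so that (\ref{M2c1b}) yields $\left( \cos \tfrac{t}{3}\right) ^{3}<e^{t\cot \left( t/2\right) -2}$, exactly the entry appearing in (\ref{M23c}). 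Finally I would record that all invoked inequalities live on the common interval $\left( 0,\pi /2\right) $ — (\ref{M3a}), (\ref{M3b}) and (\ref{M2b}) are stated there, (\ref{M2c1b}) on $\left( 0,\pi \right) \supset \left( 0,\pi /2\right) $, and the last inequality on $\left( 0,\infty \right) $ — so chaining through the matched endpoints by transitivity delivers the full string (\ref{M23c}).
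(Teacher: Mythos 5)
Your proposal is correct and follows essentially the same route as the paper: the chain is assembled by transitivity from the square-rooted forms of (\ref{M3a}) and the first two inequalities of (\ref{M2b}), then (\ref{M3b}), then (\ref{M2c1b}), capped by the cited bound $e^{-t^{2}/6}<\frac{2+\cos t}{3}$. Your explicit bridge $\left(\cos\frac{t}{3}\right)^{3}<\left(\cos\frac{t}{3}\right)^{1/3}$ at the seam with (\ref{M2c1b}) is a careful touch the paper omits (the exponent $1/3$ there appears to be a typo for $3$, since $1/(3p^{2})=3$ at $p=1/3$), and either reading makes your argument go through.
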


The fourth and fifth inequalities in (\ref{M23c}) implies that for $t\in
\left( 0,\pi /2\right) $, 
\begin{equation}
e^{t\cot t-1}<\left( \cos \tfrac{t}{2}\right) ^{8/3}<\left( \frac{\sin t}{t}%
\right) ^{2}.  \label{M23c1}
\end{equation}%
Further, we have

\begin{corollary}
\label{Corollary M2c2}For $t\in \left( 0,\pi /2\right) $, we have 
\begin{eqnarray}
\frac{\pi ^{2}}{4e}\left( \frac{\sin t}{t}\right) ^{2} &<&\left( \frac{\sin t%
}{t}\right) ^{1/\left( \ln \pi -\ln 2\right) }<\left( \cos \frac{t}{2}%
\right) ^{2/\ln 2}<e^{t\cot t-1}  \label{M2c2} \\
&<&\left( \cos \tfrac{t}{2}\right) ^{8/3}<\left( \frac{\sin t}{t}\right)
^{2}<\frac{2^{10/3}}{\pi ^{2}}\left( \cos \tfrac{t}{2}\right) ^{8/3},  \notag
\end{eqnarray}%
\begin{eqnarray}
\frac{\sin t}{t} &<&\tfrac{2}{3}+\tfrac{1}{3}\cos t<\frac{e^{t\cot t-1}+1}{2}%
,  \label{M2c3} \\
e^{t\cot t-1} &>&\tfrac{2}{3}\cos t+\tfrac{1}{3}>\frac{\frac{\sin t}{t}+\cos
t}{2}.  \label{M2c4}
\end{eqnarray}
\end{corollary}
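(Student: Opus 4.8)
The plan is to obtain the long chain (\ref{M2c2}) by splicing together inequalities already proved, and to reduce (\ref{M2c3}) and (\ref{M2c4}) almost entirely to earlier results; only three links in (\ref{M2c2}) require genuinely new work. Indeed, the three middle links $\left(\cos\frac{t}{2}\right)^{2/\ln 2}<e^{t\cot t-1}<\left(\cos\frac{t}{2}\right)^{8/3}$ are exactly (\ref{M2a}), while $\left(\cos\frac{t}{2}\right)^{8/3}<\left(\frac{\sin t}{t}\right)^2$ is (\ref{M23c1}). Thus it remains to establish the leftmost link $\frac{\pi^2}{4e}\left(\frac{\sin t}{t}\right)^2<\left(\frac{\sin t}{t}\right)^{1/(\ln\pi-\ln 2)}$, the link $\left(\frac{\sin t}{t}\right)^{1/(\ln\pi-\ln 2)}<\left(\cos\frac{t}{2}\right)^{2/\ln 2}$, and the rightmost link $\left(\frac{\sin t}{t}\right)^2<\frac{2^{10/3}}{\pi^2}\left(\cos\frac{t}{2}\right)^{8/3}$.

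The leftmost link is elementary. Since $\frac{\sin t}{t}$ is strictly decreasing on $(0,\pi/2)$ with value $2/\pi$ at the right endpoint, we have $\frac{\sin t}{t}>\frac{2}{\pi}$ there; and since $\ln(\pi/2)<\tfrac12$ the exponent $c:=\frac{1}{\ln\pi-\ln 2}-2$ is positive, so $\left(\frac{\sin t}{t}\right)^{c}>\left(\frac{2}{\pi}\right)^{c}$. The point is the exact identity $\left(\frac{2}{\pi}\right)^{c}=e^{c\ln(2/\pi)}=e^{-1+2\ln(\pi/2)}=\frac{\pi^2}{4e}$, so multiplying by $\left(\frac{\sin t}{t}\right)^2$ yields precisely the claimed link. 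The same bound $\frac{\sin t}{t}>\frac{2}{\pi}$ together with $\cos\frac{t}{2}>\frac{1}{\sqrt2}$ supplies the exact endpoint values that produce the precise constants in the two remaining links.

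For the second and third links I would prove two monotonicity statements on $(0,\pi/2)$. For $\left(\frac{\sin t}{t}\right)^{1/(\ln\pi-\ln 2)}<\left(\cos\frac{t}{2}\right)^{2/\ln 2}$, taking logarithms and dividing by $\ln\cos\frac t2<0$ reduces the claim to $H(t):=\frac{\ln(\sin t/t)}{\ln\cos(t/2)}>\frac{2(\ln\pi-\ln 2)}{\ln 2}$, and one computes $H\big((\pi/2)^-\big)=\frac{2(\ln\pi-\ln2)}{\ln 2}$; hence it suffices that $H$ be strictly decreasing. Writing $H=\frac{f}{g}$ with $f=\ln\frac{\sin t}{t}$, $g=\ln\cos\frac t2$ (both vanishing at $0^+$), I would apply Lemma \ref{Lemma A}: after using (\ref{2.2}) and (\ref{2.3}) the quotient $f'/g'=\frac{\cot t-1/t}{-\frac12\tan(t/2)}$ becomes the ratio of the power series $\sum\frac{2^{2n}}{(2n)!}|B_{2n}|t^{2n-1}$ over $\sum\frac{2^{2n}-1}{(2n)!}|B_{2n}|t^{2n-1}$, whose coefficient quotient $\frac{2^{2n}}{2^{2n}-1}$ is strictly decreasing in $n$, so Lemma \ref{Lemma B} gives $f'/g'$ decreasing and Lemma \ref{Lemma A} then gives $H$ decreasing. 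For $\left(\frac{\sin t}{t}\right)^2<\frac{2^{10/3}}{\pi^2}\left(\cos\frac{t}{2}\right)^{8/3}$, I would show $\phi(t):=\left(\frac{\sin t}{t}\right)^2\left(\cos\frac t2\right)^{-8/3}$ is strictly increasing from $\phi(0^+)=1$ to $\phi\big((\pi/2)^-\big)=\frac{2^{10/3}}{\pi^2}$; differentiating and expanding via (\ref{2.2}) and (\ref{2.3}) gives $(\ln\phi)'(t)=\frac23\sum_{n\ge1}\frac{|B_{2n}|}{(2n)!}(2^{2n}-4)\,t^{2n-1}$, whose coefficients are $0$ for $n=1$ and positive for $n\ge2$, so $(\ln\phi)'>0$. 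Then $\phi(t)<\phi\big((\pi/2)^-\big)=\frac{2^{10/3}}{\pi^2}$ gives the link.

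Finally, (\ref{M2c3}) and (\ref{M2c4}) reduce to earlier inequalities. The first link of (\ref{M2c3}), $\frac{\sin t}{t}<\frac23+\frac13\cos t$, is the last part of (\ref{sint/t}); and clearing denominators shows the second link of (\ref{M2c4}), $\frac23\cos t+\frac13>\frac{\sin t/t+\cos t}{2}$, is equivalent to this very inequality $\frac{\sin t}{t}<\frac{2+\cos t}{3}$. The first link of (\ref{M2c4}), $e^{t\cot t-1}>\frac23\cos t+\frac13$, follows by combining the second inequality of (\ref{M3a}), $\frac23\cos t+\frac13<\left(\cos\frac{2t}{3}\right)^{3/2}$, with the first inequality of (\ref{M1}); and clearing denominators shows the second link of (\ref{M2c3}), $\frac23+\frac13\cos t<\frac{e^{t\cot t-1}+1}{2}$, is equivalent to this same bound $e^{t\cot t-1}>\frac13+\frac23\cos t$. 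The only real obstacle is the two monotonicity claims, but each collapses to the coefficient-ratio test of Lemma \ref{Lemma B} (or to a manifestly sign-definite power series), so no delicate estimation is needed.
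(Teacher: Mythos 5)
Your argument is correct, and for most of the chain it coincides with the paper's own proof: the first link of (\ref{M2c2}) is handled by exactly the same Jordan-inequality computation resting on the identity $\left(2/\pi\right)^{1/(\ln\pi-\ln2)-2}=\pi^{2}/(4e)$, the third and fourth links are read off from (\ref{M2a}), and (\ref{M2c3}), (\ref{M2c4}) are reduced, just as in the paper, to Cusa's inequality $\frac{\sin t}{t}<\frac{2+\cos t}{3}$ and to $e^{t\cot t-1}>\frac{2}{3}\cos t+\frac{1}{3}$ (obtained from (\ref{M3a}) and (\ref{M1})). Where you genuinely diverge is in the second, fifth and last links of (\ref{M2c2}): the paper simply quotes the Lv--Wang--Chu inequality (\ref{Lv}) from \cite{Lv.AML.25.2012}, which yields the second and fifth links upon raising to the powers $1/(\ln\pi-\ln2)$ and $2$, and quotes inequality (3.15) of \cite{Yang.arxiv.1206.4911.2012} for the terminal constant $2^{10/3}/\pi^{2}$, whereas you re-prove all three from scratch --- the decreasing quotient $H(t)=\ln(\sin t/t)/\ln\cos(t/2)$ via Lemmas \ref{Lemma A} and \ref{Lemma B} with coefficient ratio $2^{2n}/(2^{2n}-1)$, and the increasing function $\phi(t)=(\sin t/t)^{2}\left(\cos\frac{t}{2}\right)^{-8/3}$ via the sign-definite series $\frac{2}{3}\sum_{n\geq1}(4^{n}-4)|B_{2n}|t^{2n-1}/(2n)!$. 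I checked the endpoint values $H\left((\pi/2)^{-}\right)=2(\ln\pi-\ln2)/\ln2$ and $\phi\left((\pi/2)^{-}\right)=2^{10/3}/\pi^{2}$ and the series computations; they are right. Your route buys a fully self-contained proof (in fact it re-derives the cited Lv inequality as a by-product) at the cost of two extra monotonicity arguments, while the paper's version is shorter but depends on two external references.
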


\begin{proof}
(i) We first prove (\ref{M2c2}). To obtain the first inequality\ in (\ref%
{M2c2}), it is enough to check that 
\begin{equation*}
\frac{1}{\ln \pi -\ln 2}\ln \frac{\sin t}{t}-\left( \ln \frac{\pi ^{2}}{4e}%
+2\ln \frac{\sin t}{t}\right) >0\text{.}
\end{equation*}%
Using Jordan inequality $\left( \sin t\right) /t>2/\pi $ for $t\in \left(
0,\pi /2\right) $ we see that the left-hand side above is greater than%
\begin{equation*}
\left( \frac{1}{\ln \pi -\ln 2}-2\right) \ln \frac{2}{\pi }-\ln \frac{\pi
^{2}}{4e}=0\text{,}
\end{equation*}%
which proves the first one.

Using the known inequalities that for $x\in \left( 0,\pi /2\right) $%
\begin{equation}
\left( \cos \frac{x}{2}\right) ^{4/3}<\frac{\sin x}{x}<\left( \cos \frac{x}{2%
}\right) ^{2\left( \ln \pi -\ln 2\right) /\ln 2}  \label{Lv}
\end{equation}%
proved in \cite{Lv.AML.25.2012} by Lv et al. gives the second and fifth ones.

The third and fourth ones are (\ref{M2c1a}). While the last one is the
right-hand side one in \cite[(3.15)]{Yang.arxiv.1206.4911.2012}, \cite%
{Yang.JMI.2013}.

(ii) Now we prove ((\ref{M2c3})). The first inequality in (\ref{M2c3}) has
proven previously, while the second one is obtained from the last two ones
in (\ref{M1c}), that is, the well-known Cusa's inequality. The third one is
equivalent to%
\begin{equation}
\tfrac{2}{3}\cos t+\tfrac{1}{3}<e^{t\cot t-1},  \label{Yang}
\end{equation}%
which easily follows from the second and third ones in (\ref{M23c}).

(iii) We show (\ref{M2c4}) at last. The first inequality is (\ref{Yang}),
while the second one is equivalent to the second one in (\ref{M2c3}), which
proves (\ref{M2c4}) and the proof is completed.
\end{proof}

By Lemma \ref{Lemma ML2} we have

\begin{theorem}
\label{Theorem M4}For $t\in \left( 0,\pi /2\right) $, the two-side inequality%
\begin{equation}
\left( \cos pt\right) ^{1/\left( 2p^{2}\right) }<\sqrt{\frac{\sin t}{t}\exp
\left( t\cot t-1\right) }<\left( \cos qt\right) ^{1/\left( 2q^{2}\right) }
\label{M4}
\end{equation}%
holds if $p\in \lbrack 1/\sqrt{3},1]$ and $q\in (0,1/2]$. Moreover, we have%
\begin{eqnarray}
\sqrt{\frac{8}{\pi e}}\cos ^{2}\frac{t}{2} &<&\cos ^{\beta }\frac{t}{2}<%
\sqrt{\frac{\sin t}{t}\exp \left( t\cot t-1\right) }<\cos ^{2}\frac{t}{2},
\label{M4a} \\
\cos ^{3/2}\frac{t}{\sqrt{3}} &<&\sqrt{\frac{\sin t}{t}\exp \left( t\cot
t-1\right) }<\cos ^{\gamma }\frac{t}{\sqrt{3}},  \label{M4b} \\
\sqrt{\cos t} &<&\sqrt{\frac{\sin t}{t}\exp \left( t\cot t-1\right) }<\cos
^{2}\frac{t}{2}  \label{M4c}
\end{eqnarray}%
where the exponents 
\begin{equation*}
\beta =\frac{\ln \pi -\ln 2+1}{\ln 2}\approx 2.0942\text{, }2\text{ \ and \ }%
3/2\text{, }\gamma =-\frac{\left( \ln \pi -\ln 2+1\right) }{2\ln (\cos \frac{%
\pi }{2\sqrt{3}})}\approx 1.4990
\end{equation*}%
are the best constants.
\end{theorem}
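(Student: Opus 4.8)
The plan is to reduce the whole theorem to Lemma~\ref{Lemma ML2} via the identity
\[
\sqrt{\tfrac{\sin t}{t}\exp(t\cot t-1)}=\exp\!\left(\tfrac12 g_1(t)\right),\qquad g_1(t)=\ln\tfrac{\sin t}{t}+t\cot t-1,
\]
so that taking logarithms turns (\ref{M4}) into $\tfrac{1}{p^{2}}\ln\cos pt<g_1(t)<\tfrac{1}{q^{2}}\ln\cos qt$. Since $\ln\cos pt<0$ on $(0,\pi/2)$, dividing through by it reverses the inequalities and the resulting quotient is exactly the function $G_p(t)=g_1(t)/\ln\cos pt$ of Lemma~\ref{Lemma ML2}. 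I would first record its endpoint values $G_p(0^{+})=1/p^{2}$ and $G_p(\tfrac{\pi}{2}^{-})=(\ln2-\ln\pi-1)/\ln\cos(\pi p/2)$, the latter because $g_1(\tfrac{\pi}{2}^{-})=\ln\tfrac{2}{\pi}-1$. Then (\ref{M4}) reads off directly: for $q\in(0,1/2]$ the increasing case of Lemma~\ref{Lemma ML2} gives $g_1(t)<\tfrac{1}{q^{2}}\ln\cos qt$ (the upper bound), while for $p\in[1/\sqrt3,1]$ the decreasing case gives $g_1(t)>\tfrac{1}{p^{2}}\ln\cos pt$ (the lower bound).

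The three refinements are the boundary instances of this scheme, obtained by substituting the extremal parameters and using both endpoint limits of $G_p$. Taking $q=1/2$ with $G_{1/2}(0^{+})=4$ and $G_{1/2}(\tfrac{\pi}{2}^{-})=2(\ln\pi-\ln2+1)/\ln2$ produces the inner two inequalities of (\ref{M4a}) with exponents $2$ and $\beta=(\ln\pi-\ln2+1)/\ln2$; taking $p=1/\sqrt3$ with $G_{1/\sqrt3}(0^{+})=3$ and $G_{1/\sqrt3}(\tfrac{\pi}{2}^{-})$ yields (\ref{M4b}) with exponents $3/2$ and $\gamma$; and taking $p=1$ (for which the factor $1/(2p^{2})$ gives $(\cos t)^{1/2}=\sqrt{\cos t}$, justified by $G_1(0^{+})=1$) together with $q=1/2$ gives (\ref{M4c}). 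In every case the exponent equals $\tfrac12$ times an endpoint value of $G_p$.

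For the remaining outermost inequality of (\ref{M4a}) I would check the clean identity $\sqrt{8/(\pi e)}=2^{-(\beta-2)/2}$, after which $\sqrt{8/(\pi e)}\cos^{2}\tfrac{t}{2}<\cos^{\beta}\tfrac{t}{2}$ is equivalent to $(\cos\tfrac{t}{2})^{\beta-2}>2^{-(\beta-2)/2}$, true on $(0,\pi/2)$ because $\beta>2$ and $\cos\tfrac{t}{2}>1/\sqrt2$ there. The best-constant assertions then follow from the same monotonicity: a one-sided bound $\cos^{s}(\cdot)$ is valid exactly when $s$ lies on the correct side of $\sup_t$ or $\inf_t$ of $\tfrac12 G_p(t)$, and since $G_p$ is monotone these extrema are attained only in the limits $t\to0^{+}$ or $t\to\tfrac{\pi}{2}^{-}$. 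This pins the largest admissible upper exponent to $\tfrac12 G_p(0^{+})$ and the smallest admissible lower exponent to $\tfrac12 G_p(\tfrac{\pi}{2}^{-})$ in the increasing case (and vice versa in the decreasing case), yielding precisely $2,\beta$ for $q=1/2$ and $3/2,\gamma$ for $p=1/\sqrt3$.

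I expect no genuine obstacle once the reduction to $G_p$ is in place; the only points requiring care are the sign bookkeeping when clearing the negative factor $\ln\cos pt$ and the evaluation of the limit $g_1(\tfrac{\pi}{2}^{-})=\ln\tfrac{2}{\pi}-1$. The role of Lemma~\ref{Lemma U,V} is auxiliary but clarifying: the two bounds in (\ref{M4}) are precisely $\sqrt{V_p(t)}$ and $\sqrt{V_q(t)}$, so that lemma orders them monotonically in the parameter and confirms the consistency of the displayed chains (for instance $\sqrt{\cos t}=\sqrt{V_1(t)}<\sqrt{V_{1/2}(t)}=\cos^{2}\tfrac{t}{2}$).
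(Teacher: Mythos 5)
Your proposal is correct and follows essentially the same route as the paper: both reduce everything to Lemma~\ref{Lemma ML2} (the monotonicity of $G_p$ and its endpoint values $1/p^2$ and $(\ln 2-\ln\pi-1)/\ln\cos(\pi p/2)$), obtain (\ref{M4}) from the increasing/decreasing cases, get (\ref{M4a})--(\ref{M4c}) by specializing to $q=1/2$, $p=1/\sqrt{3}$, and $p=1$, and verify the outermost inequality of (\ref{M4a}) via the bound $\cos\frac{t}{2}>1/\sqrt{2}$ --- your identity $\sqrt{8/(\pi e)}=2^{-(\beta-2)/2}$ is just a repackaging of the paper's computation. Your remark on best constants via the monotone extremes of $\tfrac12 G_p$ is a correct filling-in of a point the paper leaves implicit.
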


\begin{proof}
(i) The first result in this theorem is a direct consequence of Lemma \ref%
{Lemma ML2}.

(ii) Letting $p=1/2\in (0,1/2]$ in (\ref{ML2}) yields the second and third
inequalities in (\ref{M4a}). To show the first one, it suffices to check
that 
\begin{equation*}
\frac{1}{2}\ln \frac{8}{\pi e}+2\ln \cos \frac{t}{2}-\frac{\ln \pi -\ln 2+1}{%
\ln 2}\ln \cos \frac{t}{2}<0.
\end{equation*}%
Since $\ln \cos \frac{t}{2}>\ln \cos \frac{\pi }{4}=-\frac{1}{2}\ln 2$, the
left-hand side is less than 
\begin{equation*}
\frac{1}{2}\ln \frac{8}{\pi e}-\left( 2-\frac{\ln \pi -\ln 2+1}{\ln 2}%
\right) \frac{1}{2}\ln 2=0,
\end{equation*}%
which proves (\ref{M4a}).

(iii) Letting $p=1/\sqrt{3}\in \lbrack 1/\sqrt{3},1]$ in (\ref{ML2}) yields (%
\ref{M4b}).

(iv) Taking $p=1\ $and $q=1/2$ in (\ref{M4}) gives (\ref{M4c}).

This proves the desired results.
\end{proof}

\begin{remark}
From the second inequality in (\ref{M4c}) in conjunction with (\ref{M23c1})
it is seen that 
\begin{equation*}
\sqrt{e^{t\cot t-1}}<\left( \frac{\sin t}{t}e^{t\cot t-1}\right)
^{1/3}<\left( \cos \tfrac{t}{2}\right) ^{4/3}<\frac{\sin t}{t}.
\end{equation*}
\end{remark}

\begin{theorem}
\label{Theorem M5}For $t\in \left( 0,\pi /2\right) $, we have%
\begin{equation}
\dfrac{\frac{e\left( \pi -2\right) }{\pi }e^{t\cot t-1}+\frac{\sin t}{t}}{2}<%
\dfrac{1+\cos t}{2}<\dfrac{\frac{\sin t}{t}+e^{t\cot t-1}}{2}<\left( e^{-1}+%
\tfrac{2}{\pi }\right) \dfrac{1+\cos t}{2}.  \label{M5}
\end{equation}
\end{theorem}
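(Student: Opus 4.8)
The plan is to reduce all three inequalities in (\ref{M5}) to the monotonicity of two explicit quotients in which the factor $e^{t\cot t-1}$ stands alone, so that logarithmic differentiation annihilates the exponential and leaves a purely trigonometric positivity statement. Denote the three inequalities in (\ref{M5}), read left to right, by (A), (B), (C). Setting
\[
R(t)=1+\cos t-\frac{\sin t}{t},\qquad \chi(t)=\frac{e^{t\cot t-1}}{R(t)},
\]
one has $R(t)>1+\cos t-1=\cos t>0$ on $(0,\pi/2)$, and after clearing $R$ the inequality (B) is exactly $\chi(t)>1$ while (A) is exactly $\chi(t)<\pi/\big(e(\pi-2)\big)$. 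A one-sided limit computation gives $\chi(0^{+})=1$ and $\chi\big((\pi/2)^{-}\big)=e^{-1}\big/\tfrac{\pi-2}{\pi}=\pi/\big(e(\pi-2)\big)$, which is precisely where the constant $e(\pi-2)/\pi$ originates. Hence (A) and (B) will both follow at once once $\chi$ is shown to be strictly increasing on $(0,\pi/2)$.

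For (C) I would proceed identically with $S(t)=\big(e^{-1}+\tfrac{2}{\pi}\big)(1+\cos t)-\tfrac{\sin t}{t}$ and $\omega(t)=e^{t\cot t-1}/S(t)$; since $e^{-1}+\tfrac2\pi>1$ and $\cos t>0$ one has $S(t)>0$, and (C) is exactly $\omega(t)<1$. Here $\omega(0^{+})=1/\big(2e^{-1}+4/\pi-1\big)<1$ and $\omega\big((\pi/2)^{-}\big)=e^{-1}/e^{-1}=1$, so (C) will follow once $\omega$ is shown to be strictly increasing. To prove the monotonicity of $\chi$ I would pass to $\ln\chi=(t\cot t-1)-\ln R$ and differentiate: because the exponential appears as a lone factor, $(\ln\chi)'=(t\cot t-1)'-R'/R$ is exponential-free. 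Writing $\sigma(t):=\tfrac{t}{\sin^{2}t}-\cot t=-(t\cot t-1)'>0$, the claim $(\ln\chi)'>0$ becomes the elementary trigonometric inequality
\[
\Phi(t):=-R'(t)-R(t)\,\sigma(t)>0,\qquad t\in(0,\pi/2),
\]
and the analogous reduction for $\omega$ is $-S'(t)-S(t)\,\sigma(t)>0$. I would multiply through by $t^{2}\sin^{2}t>0$ to clear denominators and then establish positivity of the resulting trigonometric polynomial by its power series, using Lemma~\ref{Lemma C} for the expansions of $\cot t$, $1/\sin^{2}t$ and $1/\sin t$, and Lemma~\ref{Lemma B} to sign the resulting quotient of power series via monotonicity of the coefficient ratio.

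The main obstacle will be the positivity of $\Phi$ in the $\chi$ case. Expanding, one finds that $-R'$ and $R\sigma$ coincide through order $t^{3}$ — the first surviving term of $\Phi$ is $\tfrac{1}{540}t^{5}$ — which is the analytic shadow of the fact that (B) is sharp to order $t^{6}$ at the origin. Consequently no crude termwise majorization can succeed; the argument must exploit the exact Bernoulli-number coefficients and their cancellation pattern, and this is exactly the place where Lemma~\ref{Lemma B} carries the weight. By contrast the $\omega$-inequality is genuinely milder, since $\omega(0^{+})<1$ strictly, so its defining difference is nonvanishing at lowest order and its sign is easier to control. Once $\Phi>0$ and its $\omega$-counterpart are secured, the strict monotonicity of $\chi$ and $\omega$ is immediate, and the best constants $e(\pi-2)/\pi$ and $e^{-1}+\tfrac{2}{\pi}$ drop out as the routine endpoint limits already recorded above.
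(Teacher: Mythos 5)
Your treatment of (A) and (B) is exactly the paper's: your $\ln\chi$ is the paper's function $g(t)=(t\cot t-1)-\ln\bigl(1+\cos t-\tfrac{\sin t}{t}\bigr)$, the endpoint values $\chi(0^{+})=1$ and $\chi\bigl((\pi/2)^{-}\bigr)=\pi/(e(\pi-2))$ agree with the paper's, and the positivity you call $\Phi>0$ is what the paper establishes by factoring $g'(t)=\frac{(t-\sin t)\,g_{1}(t)}{t^{2}R(t)}$ with $g_{1}(t)=-\frac{t^{2}(1+\cos t)}{\sin^{2}t}+\frac{t}{\sin t}+1$ and expanding $g_{1}$ as a single power series with nonnegative Bernoulli-number coefficients via Lemma~\ref{Lemma C} (Lemma~\ref{Lemma B} is not actually needed there). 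Your observation that the expansion of $\Phi$ begins with $t^{5}/540$ is correct and consistent with that computation.

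The proof of (C), however, has a genuine gap: the function $\omega=e^{t\cot t-1}/S$ is \emph{not} increasing on $(0,\pi/2)$, so the inequality $-S'-S\sigma>0$ to which you reduce (C) is false near $t=0$. Indeed, with $c=e^{-1}+2/\pi$ one has $S(t)=(2c-1)-\bigl(\tfrac{c}{2}-\tfrac{1}{6}\bigr)t^{2}+O(t^{4})$ and $t\cot t-1=-\tfrac{t^{2}}{3}+O(t^{4})$, whence
\begin{equation*}
\ln\omega(t)=-\ln(2c-1)+\frac{1-c}{6(2c-1)}\,t^{2}+O(t^{4}),
\end{equation*}
and since $c=e^{-1}+2/\pi\approx 1.0045>1$ the coefficient of $t^{2}$ is negative: $\omega$ strictly decreases near $0$ (numerically $\omega(0^{+})\approx 0.99108$ while $\omega(\pi/4)\approx 0.99065$) before climbing back to $\omega\bigl((\pi/2)^{-}\bigr)=1$. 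So monotonicity cannot carry (C), and the two endpoint values alone do not give $\omega<1$ on the whole interval. The paper avoids this with a different quotient: it sets $h(t)=\bigl(\tfrac{\sin t}{t}+e^{t\cot t-1}\bigr)/(1+\cos t)$, shows that $h'$ has the sign of $\tfrac{\sin^{2}t}{t^{2}}-e^{t\cot t-1}$, which is positive by the previously established inequality $e^{t\cot t-1}<\bigl(\cos\tfrac{t}{2}\bigr)^{8/3}<\bigl(\tfrac{\sin t}{t}\bigr)^{2}$ in (\ref{M23c1}), and concludes $1=h(0^{+})<h(t)<h\bigl((\pi/2)^{-}\bigr)=e^{-1}+2/\pi$, which delivers (B) and (C) simultaneously. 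You need either this quotient, whose monotonicity is genuine, or some other argument for (C); your $\omega$ will not work.
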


\begin{proof}
(i) We first prove the first and second inequalities in (\ref{M5}). For this
purpose, let us define%
\begin{equation*}
g\left( t\right) =\left( t\cot t-1\right) -\ln \left( 1+\cos t-\frac{\sin t}{%
t}\right) .
\end{equation*}%
Differentiating $g\left( t\right) $ gives 
\begin{equation*}
g^{\prime }\left( t\right) =\frac{t-\sin t}{t\left( t-\sin t+t\cos t\right) }%
g_{1}\left( t\right) ,
\end{equation*}%
where 
\begin{equation*}
g_{1}\left( t\right) =-\frac{\cos t+1}{\sin ^{2}t}t^{2}+\frac{t}{\sin t}+1.
\end{equation*}%
Using double angle formula and Lemma \ref{Lemma C} we have%
\begin{eqnarray*}
g_{1}\left( t\right)  &=&-\frac{t^{2}}{2\sin ^{2}\frac{t}{2}}+\frac{t}{\sin t%
}+1 \\
&=&-\tfrac{t^{2}}{2}\left( \tfrac{1}{\left( \frac{t}{2}\right) ^{2}}%
+\sum_{n=1}^{\infty }\tfrac{\left( 2n-1\right) 2^{2n}}{\left( 2n\right) !}%
|B_{2n}|\left( \tfrac{t}{2}\right) ^{2n-2}\right) +t\left( \tfrac{1}{t}%
+\sum_{n=1}^{\infty }\tfrac{2^{2n}-2}{\left( 2n\right) !}|B_{2n}|t^{2n-1}%
\right) +1 \\
&=&\sum_{n=1}^{\infty }\frac{4^{n-1}-n}{\left( 2n\right) !}|B_{2n}|t^{2n}>0.
\end{eqnarray*}%
Hence, $g^{\prime }\left( t\right) >0$ for $t\in \left( 0,\pi /2\right) $,
and so%
\begin{equation*}
0=\lim_{t\rightarrow 0^{+}}g\left( t\right) <g\left( t\right)
<\lim_{t\rightarrow \pi /2^{-}}g\left( t\right) =\ln \frac{\pi }{e\left( \pi
-2\right) },
\end{equation*}%
which implies the desired inequalities.

(ii) Now we prove the second and third ones by defining 
\begin{equation*}
h\left( t\right) =\frac{\frac{\sin t}{t}+e^{t\cot t-1}}{1+\cos t}.
\end{equation*}%
Differentiation leads to 
\begin{eqnarray*}
h^{\prime }\left( t\right) &=&\frac{-\left( \cos t+1\right) \left( t-\sin
t\right) }{\left( \sin ^{2}t\right) \left( \cos t+1\right) ^{2}}e^{t\cot
t-1}+\frac{\left( \cos t+1\right) \left( t-\sin t\right) }{t^{2}\left( \cos
t+1\right) ^{2}} \\
&=&\frac{t-\sin t}{\left( \sin ^{2}t\right) \left( \cos t+1\right) ^{2}}%
\left( \frac{\sin ^{2}t}{t^{2}}-e^{t\cot t-1}\right) .
\end{eqnarray*}%
It is obtained by \ref{M23c1} that $h^{\prime }\left( t\right) >0$ for $t\in
\left( 0,\pi /2\right) $, and therefore, we conclude that 
\begin{equation*}
1=\lim_{t\rightarrow 0^{+}}h\left( t\right) <h\left( t\right)
<\lim_{t\rightarrow \pi /2^{-}}h\left( t\right) =e^{-1}+\frac{2}{\pi },
\end{equation*}%
which deduces the second and third ones in ((\ref{M5}).
\end{proof}

\begin{remark}
In the same way as part two of Theorem \ref{Theorem M5}, we can prove a new
two-side inequality for hyperbolic functions: 
\begin{equation*}
\frac{1+\cosh t}{e}<\frac{\frac{\sinh t}{t}+e^{\frac{t\cosh t}{\sinh t}-1}}{2%
}<\frac{1+\cosh t}{2}.
\end{equation*}%
In fact, we define%
\begin{equation*}
k\left( t\right) =\frac{\frac{\sinh t}{t}+e^{\frac{t\cosh t}{\sinh t}-1}}{%
1+\cosh t}.
\end{equation*}%
Differentiation and simplifying give us 
\begin{equation*}
k^{\prime }\left( t\right) =\frac{\sinh t-t}{\left( \cosh t+1\right) \sinh
^{2}t}\left( e^{t\frac{\cosh t}{\sinh t}-1}-\frac{\sinh ^{2}t}{t^{2}}\right)
<0,
\end{equation*}%
where the inequality holds is due to the first one in (\ref{L-I-G}) and $%
\sinh t>t$ for $t>0$. It follows that 
\begin{equation*}
2e^{-1}=\lim_{t\rightarrow \infty }k\left( t\right) <k\left( t\right)
<\lim_{t\rightarrow 0}k\left( t\right) =1,
\end{equation*}%
which is desired inequality. Clearly, this method is simpler than Alzer's,
and our result is also more nice.
\end{remark}

Employing inequalities (\ref{M4c}) and (\ref{M5}), we immediately the
trigonometric version of (\ref{L-I-A-G}).

\begin{corollary}
For $t\in \left( 0,\pi /2\right) $, we have%
\begin{equation*}
\sqrt{\cos t}<\left( \frac{\sin t}{t}e^{t\cot t-1}\right) ^{1/2}<\frac{%
1+\cos t}{2}<\frac{\frac{\sin t}{t}+e^{t\cot t-1}}{2}<\left( e^{-1}+\frac{2}{%
\pi }\right) \frac{1+\cos t}{2},
\end{equation*}
\end{corollary}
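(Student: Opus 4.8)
The plan is to assemble the four-link chain directly from the two previously established results (\ref{M4c}) and (\ref{M5}), with a single half-angle identity serving as the bridge between them; no fresh estimation is needed, only a careful matching of each inequality in the corollary to a bound already in hand.

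First I would note that the geometric-mean quantity $\left(\frac{\sin t}{t}e^{t\cot t-1}\right)^{1/2}$ in the corollary is literally the central expression $\sqrt{\frac{\sin t}{t}\exp(t\cot t-1)}$ of (\ref{M4c}). Hence the opening link $\sqrt{\cos t}<\left(\frac{\sin t}{t}e^{t\cot t-1}\right)^{1/2}$ is just the left inequality of (\ref{M4c}), quoted verbatim. The one step that requires any manipulation is the second link $\left(\frac{\sin t}{t}e^{t\cot t-1}\right)^{1/2}<\frac{1+\cos t}{2}$: here the right inequality of (\ref{M4c}) supplies the upper bound $\cos^2\frac{t}{2}$, and I would invoke the half-angle identity $\cos^2\frac{t}{2}=\frac{1+\cos t}{2}$ to recast that bound in the form the corollary demands. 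This identity is precisely where (\ref{M4c}) is glued to (\ref{M5}), since (\ref{M5}) is phrased throughout in terms of $\frac{1+\cos t}{2}$.

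Finally, the last two links $\frac{1+\cos t}{2}<\frac{\frac{\sin t}{t}+e^{t\cot t-1}}{2}$ and $\frac{\frac{\sin t}{t}+e^{t\cot t-1}}{2}<\left(e^{-1}+\frac{2}{\pi}\right)\frac{1+\cos t}{2}$ are exactly the middle and rightmost inequalities of (\ref{M5}), so they may be cited directly; concatenating the four links yields the asserted chain. I expect no genuine obstacle here: both feeder results are already proved and the half-angle formula is elementary, so the only care needed is to write the shared term consistently as $\cos^2\frac{t}{2}=\frac{1+\cos t}{2}$, ensuring that the upper bound inherited from (\ref{M4c}) meets the lower endpoint of (\ref{M5}) exactly.
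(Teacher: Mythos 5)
Your proposal is correct and matches the paper's own derivation exactly: the corollary is stated there as an immediate consequence of combining (\ref{M4c}) and (\ref{M5}), with the half-angle identity $\cos^{2}\frac{t}{2}=\frac{1+\cos t}{2}$ doing precisely the gluing you describe. No further comment is needed.
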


Lastly, we give the trigonometric versions of (\ref{I-A-G1}) and (\ref%
{I-A-G2}).

\begin{theorem}
\label{Theorem M6}Let $t\in \left( 0,\pi /2\right) $. Then

(i) if $p\geq 6/5$, then the two-side inequality%
\begin{equation}
\alpha \cos ^{p}t+\left( 1-\alpha \right) <e^{p\left( t\cot t-1\right)
}<\beta \cos ^{p}t+\left( 1-\beta \right)  \label{M6}
\end{equation}%
holds if and only if $\alpha \geq \left( 1-e^{-p}\right) $ and $\beta \leq
2/3$;

(ii) if $0<p\leq 1$, then (\ref{M6}) holds if and only if $\alpha \geq 2/3$
and $\beta \leq \left( 1-e^{-p}\right) $;

(iii) if $p<0$, then (\ref{M6}) holds if and only if\ $\alpha \leq 0$ and $%
\beta \geq 2/3$;

(iv) the double inequality 
\begin{equation}
M_{p}\left( \cos t,1;\tfrac{2}{3}\right) <e^{t\cot t-1}<M_{q}\left( \cos t,%
\tfrac{2}{3}\right)  \label{M6a}
\end{equation}%
holds if and only if $p\leq \ln 3$ and $q\geq 6/5$, where $M_{p}\left(
x,y;w\right) $ ($w\in \left( 0,1\right) $) is the weighted power mean of
order $r$ of positive $x$ and $y$ defined by 
\begin{equation}
M_{r}\left( x,y;w\right) =\left( wx^{r}+\left( 1-w\right) y^{r}\right) ^{1/r}%
\text{ if }r\neq 0\text{ and }M_{0}\left( x,y;w\right) =x^{w}y^{1-w}
\label{w}
\end{equation}
\end{theorem}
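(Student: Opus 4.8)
The plan is to reduce the two-sided inequality \eqref{M6} to the behaviour of a single auxiliary quotient. For fixed $p\neq0$ put
\[
\Phi_{p}(t)=\frac{e^{p(t\cot t-1)}-1}{\cos^{p}t-1}.
\]
For $t\in(0,\pi/2)$ one has $\cos^{p}t-1<0$ when $p>0$ and $\cos^{p}t-1>0$ when $p<0$; dividing the two inequalities in \eqref{M6} by $\cos^{p}t-1$ therefore shows that \eqref{M6} is equivalent to $\beta<\Phi_{p}(t)<\alpha$ for all $t$ when $p>0$, and to $\alpha<\Phi_{p}(t)<\beta$ for all $t$ when $p<0$. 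Thus the sharp constants are controlled by $\inf\Phi_{p}$ and $\sup\Phi_{p}$. A Taylor expansion at $0$, using $t\cot t-1=-t^{2}/3+O(t^{4})$ and $\cos^{p}t-1=-pt^{2}/2+O(t^{4})$, gives $\Phi_{p}(0^{+})=2/3$, while $t\to(\pi/2)^{-}$ gives $\Phi_{p}((\pi/2)^{-})=1-e^{-p}$ for $p>0$ and $=0$ for $p<0$. These are exactly the constants in (i)--(iii), so the necessity of every stated bound on $\alpha$ and $\beta$ follows at once by letting $t\to0^{+}$ and $t\to(\pi/2)^{-}$.

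For sufficiency I would show that $\Phi_{p}$ is monotone on $(0,\pi/2)$ in each stated range, so that its range is precisely the interval between the two boundary values. The natural dichotomy is that $\Phi_{p}$ is decreasing for $p<0$ and for $0<p\le1$ (where $1-e^{-p}<2/3$), and increasing for $p\ge6/5$ (where $1-e^{-p}>2/3$); this is why (i) pairs $(1-e^{-p},2/3)$ while (ii) and (iii) pair $(2/3,1-e^{-p})$ and $(0,2/3)$. I would attempt the monotonicity via the monotone form of l'Hospital's rule (Lemma \ref{Lemma A}), writing $\Phi_{p}=(f-f(0^{+}))/(g-g(0^{+}))$ with $f=e^{p(t\cot t-1)}$ and $g=\cos^{p}t$, so that the problem reduces to the monotonicity of
\[
\frac{f'}{g'}=e^{p(t\cot t-1-\ln\cos t)}\,\frac{(t-\sin t\cos t)\cos t}{\sin^{3}t},
\]
which I would treat by feeding the Bernoulli-number expansions of Lemma \ref{Lemma C} into the coefficient-ratio criterion of Lemma \ref{Lemma B}, exactly as in the proofs of Lemmas \ref{Lemma ML1} and \ref{Lemma ML2}. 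Once $\Phi_{p}$ is monotone, the boundary values computed above pin down $\inf\Phi_{p}$ and $\sup\Phi_{p}$ and finish (i), (ii) and (iii).

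Part (iv) I would obtain by passing to the power-mean form. For $p>0$, raising to the power $p$ turns $M_{p}(\cos t,1;\tfrac{2}{3})<e^{t\cot t-1}$ into \eqref{M6} with $\alpha=2/3$, and $e^{t\cot t-1}<M_{q}(\cos t,1;\tfrac{2}{3})$ into \eqref{M6} with $\beta=2/3$. Necessity of $q\ge6/5$ comes from the expansion at $0$: the coefficient of $t^{4}$ in $M_{q}(\cos t,1;\tfrac{2}{3})$ equals $q/36$, against $1/30$ for $e^{t\cot t-1}$, forcing $q\ge6/5$. Necessity of $p\le\ln3$ comes from $t\to(\pi/2)^{-}$, where $M_{p}(\cos t,1;\tfrac{2}{3})\to3^{-1/p}$ and $e^{t\cot t-1}\to e^{-1}$, so $3^{-1/p}\le e^{-1}$, i.e. $p\le\ln3$. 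For sufficiency I would use the classical monotonicity of $r\mapsto M_{r}(\cos t,1;\tfrac{2}{3})$ in the order $r$: it is enough to verify the two extremal exponents $q=6/5$ and $p=\ln3$, after which every admissible $q\ge6/5$ and $p\le\ln3$ follows by comparison.

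The crux is exactly these two extremal cases, above all $p=\ln3$. This exponent lies in the gap $(1,6/5)$ left uncovered by (i)--(iii), and there $\Phi_{p}$ fails to be monotone: its two boundary values both equal $2/3$, so the difference $P(t)=e^{(\ln3)(t\cot t-1)}-\tfrac{2}{3}\cos^{\ln3}t-\tfrac{1}{3}$ vanishes at both $t=0$ and $t=(\pi/2)^{-}$ and must be shown strictly positive in between. I expect this two-endpoint positivity---and the parallel borderline estimate at $q=6/5$, where the $t^{4}$ terms cancel and one must descend to the $t^{6}$ Bernoulli coefficient---to be the hard part. I would resolve it by proving that $P'$ changes sign exactly once on $(0,\pi/2)$, writing $P'$ as a manifestly positive factor times a function with a single sign change whose behaviour is read from the series of Lemma \ref{Lemma C}; since $P(0^{+})=P((\pi/2)^{-})=0$, such unimodality forces $P>0$ throughout and completes (iv).
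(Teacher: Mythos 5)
Your reduction to the quotient $\Phi_{p}$ is exactly the paper's $u=u_{1}/u_{2}$, your endpoint limits and the resulting necessity arguments for (i)--(iii) and for (iv) are the same as the paper's, and your identification of $p=\ln 3\in(1,6/5)$ as the genuinely hard case (both boundary values equal to $2/3$) is precisely where the paper also has to work hardest. But there is one concrete gap in the sufficiency step that carries the whole theorem: you propose to prove the monotonicity of
\[
\frac{f'}{g'}=e^{p(t\cot t-1-\ln\cos t)}\,\frac{(t-\sin t\cos t)\cos t}{\sin^{3}t}
\]
``by feeding the Bernoulli-number expansions of Lemma \ref{Lemma C} into the coefficient-ratio criterion of Lemma \ref{Lemma B}, exactly as in Lemmas \ref{Lemma ML1} and \ref{Lemma ML2}.'' That cannot work as stated: in those two lemmas $f'/g'$ is literally a ratio $\sum a_{n}t^{2n-1}/\sum b_{n}t^{2n-1}$ of positive-coefficient series, whereas here the exponential prefactor $e^{p(t\cot t-1-\ln\cos t)}$ (and the $p$-dependent power of $\cos t$) destroys that structure; note also that the answer must flip direction between $p\le 1$ and $p\ge 6/5$, so no factorwise monotonicity argument can succeed either. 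The paper's resolution is to differentiate once more: it writes $u_{3}'$ in the factored form (\ref{du_3}), $u_{3}'=\frac{e^{p(t\cot t-1)}}{\sin^{5}t\cos^{p}t}\,u_{4}(t)\left(p-\frac{u_{5}(t)}{u_{4}(t)}\right)$ with $u_{4}=(\sin t-t\cos t)(t-\cos t\sin t)>0$, and only then applies Lemma \ref{Lemma B} to the genuine series ratio $u_{5}/u_{4}$, showing it decreases from $6/5$ to $1$. That single computation is the engine of all four parts, and your proposal does not contain a workable substitute for it.

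Granting that computation, your endgame for part (iv) at $p=\ln 3$ is a genuinely different and in fact cleaner route than the paper's. You would observe that $P'(t)=p\cos^{p-1}t\,\sin t\,\bigl(\tfrac{2}{3}-u_{3}(t)\bigr)$, and since $u_{3}$ starts at $2/3$, decreases, then increases to $+\infty$ (for $p>1$), $P'$ has exactly one sign change, so $P$ is unimodal and positive because it vanishes at both endpoints. The paper instead splits $(0,\pi/2)$ at the minimum point $t_{0}$ of $u_{3}$, applies Lemma \ref{Lemma A} with the anchor $0^{+}$ on $(0,t_{0})$ to get (\ref{i1}) and with the anchor $\pi/2^{-}$ on $(t_{0},\pi/2)$ to get (\ref{i2}), and then needs a final comparison of the two lower bounds using $p\le\ln 3$; your unimodality argument avoids that last comparison entirely. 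Two smaller points: the borderline $q=6/5$ needs no descent to the $t^{6}$ coefficient --- it is already covered by part (i), since $u_{5}/u_{4}<6/5$ strictly on the open interval; and your stated ``$t^{4}$ coefficients $q/36$ versus $1/30$'' are not literally the coefficients of $\ln M_{q}$ and $t\cot t-1$ (those are $-\tfrac{1}{18}+\tfrac{q}{36}$ and $-\tfrac{1}{45}$), though the difference $\tfrac{1}{30}-\tfrac{q}{36}=-\tfrac{1}{36}\left(q-\tfrac{6}{5}\right)$ does give the correct threshold, in agreement with the paper.
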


\begin{proof}
For $t\in \left( 0,\pi /2\right) $ and $p\neq 0$, we define $\ $%
\begin{equation*}
u\left( t\right) =\frac{1-e^{p\left( t\cot t-1\right) }}{1-\cos ^{p}t}:=%
\frac{u_{1}\left( t\right) }{u_{2}\left( t\right) }.
\end{equation*}%
Since $u_{1}\left( 0^{+}\right) =u_{2}\left( 0^{+}\right) =0$, $u\left(
t\right) $ can be written as%
\begin{equation}
u\left( t\right) =\frac{u_{1}\left( t\right) -u_{1}\left( 0^{+}\right) }{%
u_{2}\left( t\right) -u_{2}\left( 0^{+}\right) }.  \label{u}
\end{equation}%
Differentiation gives%
\begin{equation}
\frac{u_{1}^{\prime }\left( t\right) }{u_{2}^{\prime }\left( t\right) }=%
\frac{e^{p\left( t\cot t-1\right) }\frac{t-\cos t\sin t}{\sin ^{2}t}}{\cos
^{p-1}t\sin t}:=u_{3}\left( t\right) ,  \label{u_3}
\end{equation}%
\begin{equation}
u_{3}^{\prime }\left( t\right) =\tfrac{e^{p\left( t\cot t-1\right) }}{\sin
^{5}t\cos ^{p}t}\left( p\times u_{4}\left( t\right) -u_{5}\left( t\right)
\right) =\tfrac{e^{p\left( t\cot t-1\right) }}{\sin ^{5}t\cos ^{p}t}%
u_{4}\left( t\right) \left( p-\frac{u_{5}\left( t\right) }{u_{4}\left(
t\right) }\right) ,  \label{du_3}
\end{equation}%
where 
\begin{eqnarray*}
u_{4}\left( t\right) &=&-t^{2}\cos t\sin ^{2}t-t^{2}\cos ^{3}t+t\cos
^{2}t\sin t+t\cos ^{2}t\sin t+t\sin ^{3}t-\cos t\sin ^{2}t, \\
u_{5}\left( t\right) &=&t\left( 3\cos ^{2}t\sin t+\sin ^{3}t\right) -3\cos
t\sin ^{2}t.
\end{eqnarray*}%
Clearly, if we prove $u_{3}^{\prime }\left( t\right) >0$ if $p\geq 6/5$ and $%
u_{3}^{\prime }\left( t\right) <0$ if $p\leq 1$ with $p\neq 0$, then by
Lemma \ref{Lemma A}\ we see that $u$ is increasing if $p\geq 6/5$ and
decreasing if $p\leq 1$ with $p\neq 0$. And then, we can derive that 
\begin{eqnarray*}
\tfrac{2}{3} &=&\lim_{t\rightarrow 0^{+}}u\left( t\right) <u\left( t\right) =%
\frac{1-e^{p\left( t\cot t-1\right) }}{1-\cos ^{p}t}<\lim_{t\rightarrow \pi
/2^{-}}u\left( t\right) =1-e^{-p}\text{ for }p\geq 6/5\text{,} \\
1-e^{-p} &=&\lim_{t\rightarrow \pi /2^{-}}u\left( t\right) <u\left( t\right)
=\frac{1-e^{p\left( t\cot t-1\right) }}{1-\cos ^{p}t}<\lim_{t\rightarrow
0^{+}}u\left( t\right) =\tfrac{2}{3}\text{ for }0<p\leq 1\text{,} \\
0 &=&\lim_{t\rightarrow \pi /2^{-}}u\left( t\right) <u\left( t\right) =\frac{%
1-e^{p\left( t\cot t-1\right) }}{1-\cos ^{p}t}<\lim_{t\rightarrow
0^{+}}u\left( t\right) =\tfrac{2}{3}\text{ for }p<0\text{,}
\end{eqnarray*}%
which yield the first, second and third results in this theorem.

Now we will show that $u_{3}^{\prime }\left( t\right) >0$ if $p\geq 6/5$ and 
$u_{3}^{\prime }\left( t\right) <0$ if $p\leq 1$ with $p\neq 0$. Simplifying
yields%
\begin{equation*}
u_{4}\left( t\right) =\left( \sin t-t\cos t\right) \left( t-\cos t\sin
t\right) >0
\end{equation*}%
for $t\in \left( 0,\pi /2\right) $. Using (\ref{2.2})--(\ref{2.4}) and
simplifying we have%
\begin{eqnarray*}
\frac{u_{5}\left( t\right) }{\cos t\sin ^{2}t} &=&3t\frac{\cos t}{\sin t}+t%
\frac{\sin t}{\cos t}-3=\sum_{n=1}^{\infty }\frac{4^{n}-4}{\left( 2n\right) !%
}2^{2n}|B_{2n}|t^{2n}, \\
\frac{u_{4}\left( t\right) }{\cos t\sin ^{2}t} &=&2t\frac{\cos t}{\sin t}%
-t^{2}\frac{1}{\sin ^{2}t}+t\frac{\sin t}{\cos t}-1=\sum_{n=1}^{\infty
}\left( 4^{n}-2n-2\right) \frac{2^{2n}}{\left( 2n\right) !}|B_{2n}|t^{2n},
\end{eqnarray*}%
and then, we get%
\begin{equation*}
\frac{u_{5}\left( t\right) }{u_{4}\left( t\right) }=\frac{\sum_{n=1}^{\infty
}\frac{4^{n}-4}{\left( 2n\right) !}2^{2n}|B_{2n}|t^{2n}}{\sum_{n=1}^{\infty
}\left( 4^{n}-2n-2\right) \frac{2^{2n}}{\left( 2n\right) !}|B_{2n}|t^{2n}}:=%
\frac{\sum_{n=2}^{\infty }a_{n}t^{2n}}{\sum_{n=2}^{\infty }b_{n}t^{2n}}.
\end{equation*}%
By Lemma \ref{Lemma B}, in order to prove the monotonicity of $u_{5}\left(
t\right) /u_{4}\left( t\right) $, it suffices to determine the monotonicity
of $a_{n}/b_{n}$. We have 
\begin{equation*}
\frac{a_{n}}{b_{n}}=\frac{4^{n}-4}{4^{n}-2n-2}:=c(n).
\end{equation*}%
Differentiating $c(x)$ we get 
\begin{equation*}
c^{\prime }(x)=-2\frac{4^{x}\left( \left( x-1\right) \ln 4-1\right) +4}{%
\left( 4^{x}-2x-2\right) ^{2}}<0
\end{equation*}%
for $x\geq 2$. Then $t\mapsto u_{5}\left( t\right) /u_{4}\left( t\right) $
is decreasing on $\left( 0,\pi /2\right) $, and we conclude that 
\begin{equation*}
1=\lim_{t\rightarrow \pi /2^{-}}\frac{u_{5}\left( t\right) }{u_{4}\left(
t\right) }<\frac{u_{5}\left( t\right) }{u_{4}\left( t\right) }%
<\lim_{t\rightarrow 0^{+}}\frac{u_{5}\left( t\right) }{u_{4}\left( t\right) }%
=\frac{6}{5}.
\end{equation*}%
Thus, $u_{3}^{\prime }\left( t\right) >0$ if $p\geq 6/5$ and $u_{3}^{\prime
}\left( t\right) <0$ if $p\leq 1$ with $p\neq 0$.

At last, we prove the fourth result. The first one implies that the
right-hand side inequality in (\ref{M6a}) holds if $q\geq 6/5$. While the
necessity is obtained from the following limit relation:%
\begin{equation*}
\lim_{t\rightarrow 0^{+}}\frac{t\cot t-1-\ln M_{q}\left( \cos t,1;\tfrac{2}{3%
}\right) }{t^{4}}\leq 0.
\end{equation*}%
Expanding in power series leads to%
\begin{equation*}
t\cot t-1-\ln M_{q}\left( \cos t,1;\tfrac{2}{3}\right) =-\frac{1}{36}\left(
q-\frac{6}{5}\right) t^{4}+O\left( t^{6}\right) .
\end{equation*}%
It is deduced that $q\geq 6/5$.

We now prove the left-hand side inequality holds if and only if $p\leq \ln 3$%
. The necessity easily follows from%
\begin{equation*}
\lim_{t\rightarrow \pi /2^{-}}\left( t\cot t-1-\ln M_{p}\left( \cos t,1;%
\tfrac{2}{3}\right) \right) =\left\{ 
\begin{array}{ll}
-1+\frac{1}{p}\ln 3 & \text{if }p>0, \\ 
\infty & \text{if }p\leq 0%
\end{array}%
\right. \geq 0.
\end{equation*}%
Next we deal with the sufficiency. We distinguish two cases.

Case 1: $p\leq 1$. By the second and third results proved previously, the
sufficiency is immediate in this case.

Case 2: $p\in (1,\ln 3]$. As shown previously, the functions $t\mapsto
u_{5}\left( t\right) /u_{4}\left( t\right) $ is decreasing on $\left( 0,\pi
/2\right) $, and so the function $t\mapsto \left( p-u_{5}\left( t\right)
/u_{4}\left( t\right) \right) :=u_{6}\left( t\right) $ is increasing on the
same interval. This together with the facts that 
\begin{equation*}
u_{6}\left( 0^{+}\right) =p-\frac{6}{5}<0\text{ \ and \ }u_{6}\left( \tfrac{%
\pi }{2}^{-}\right) =p-1>0
\end{equation*}%
indicates that there is a unique $t_{0}\in \left( 0,\pi /2\right) $ such
that $u_{6}\left( t\right) <0$ for $t\in \left( 0,t_{0}\right) $ and $%
u_{6}\left( t\right) >0$ for $t\in \left( t_{0},\pi /2\right) $, which by (%
\ref{du_3}) in turn reveals that $u_{3}$ is decreasing on $\left(
0,t_{0}\right) $ and increasing on $\left( t_{0},\pi /2\right) $. From Lemma %
\ref{Lemma A}\ it is seen that $u$ is decreasing on $\left( 0,t_{0}\right) $%
, and so we have%
\begin{equation*}
u\left( t_{0}\right) \leq u\left( t\right) =\frac{1-e^{p\left( t\cot
t-1\right) }}{1-\cos ^{p}t}<u\left( 0^{+}\right) =\frac{2}{3}\text{ for }%
t\in (0,t_{0}],
\end{equation*}%
which can be changed into 
\begin{equation}
e^{p\left( t\cot t-1\right) }>\frac{2}{3}\cos ^{p}t+\frac{1}{3}\text{ for }%
t\in (0,t_{0}].  \label{i1}
\end{equation}%
On the other hand, Lemma \ref{Lemma A} also implies that 
\begin{equation*}
t\mapsto \frac{u_{1}\left( t\right) -u_{1}\left( \tfrac{\pi }{2}^{-}\right) 
}{u_{2}\left( t\right) -u_{2}\left( \tfrac{\pi }{2}^{-}\right) }=\frac{%
e^{p\left( t\cot t-1\right) }-e^{-p}}{\cos ^{p}t}:=v\left( t\right)
\end{equation*}%
is increasing on $\left( t_{0},\pi /2\right) $, and therefore, we get 
\begin{equation*}
v\left( t\right) =\frac{e^{p\left( t\cot t-1\right) }-e^{-p}}{\cos ^{p}t}>%
\frac{e^{p\left( t_{0}\cot t_{0}-1\right) }-e^{-p}}{\cos ^{p}t_{0}}\text{
for }t\in \left( t_{0},\pi /2\right) ,
\end{equation*}%
which implies that 
\begin{equation}
e^{p\left( t\cot t-1\right) }>\frac{e^{p\left( t_{0}\cot t_{0}-1\right)
}-e^{-p}}{\cos ^{p}t_{0}}\cos ^{p}t+e^{-p}\text{ for }t\in \left( t_{0},\pi
/2\right) .  \label{i2}
\end{equation}%
Clearly, if we prove the right-hand side in (\ref{i2}) is also greater than
the one in (\ref{i1}), then the proof will be complete. Since $t_{0}$
satisfies (\ref{i1}), for $t\in \left( t_{0},\pi /2\right) $, we have%
\begin{eqnarray*}
e^{p\left( t\cot t-1\right) } &>&>\frac{e^{p\left( t_{0}\cot t_{0}-1\right)
}-e^{-p}}{\cos ^{p}t_{0}}\cos ^{p}t+e^{-p}>\frac{\left( \frac{2}{3}\cos
^{p}t_{0}+\frac{1}{3}\right) -e^{-p}}{\cos ^{p}t_{0}}\cos ^{p}t+e^{-p} \\
&=&\frac{2}{3}\cos ^{p}t+\frac{1}{3}+\left( e^{-p}-\frac{1}{3}\right) \frac{%
\cos ^{p}t_{0}-\cos ^{p}t}{\cos ^{p}t_{0}}\geq \frac{2}{3}\cos ^{p}t+\frac{1%
}{3},
\end{eqnarray*}%
where the last inequality holds is due to $p\in (1,\ln 3]$ and $t\in \left(
t_{0},\pi /2\right) $.

Thus the proof is finished.
\end{proof}

\section{The corresponding inequalities for means}

For $a,b>0$ with $a\neq b$, let $L$, $Q$, $A$, $G$, $P$ and $T$ stand for
the logarithmic,, quadratic,, arithmetic, geometric, the first and second
means \cite{Seiffert.EM.42.1987}, \cite{Seiffert.DW.29.1995} of $a$ and $b$
defined by%
\begin{equation*}
\begin{array}[b]{lll}
Q=Q\left( a,b\right) =\sqrt{\frac{a^{2}+b^{2}}{2}}, & A=A\left( a,b\right) =%
\frac{a+b}{2}, & G=G\left( a,b\right) =\sqrt{ab},%
\end{array}%
\end{equation*}%
\begin{equation*}
\begin{array}[b]{ll}
P=P\left( a,b\right) =\frac{a-b}{2\arcsin \frac{a-b}{a+b}}, & T=T\left(
a,b\right) =\frac{a-b}{2\arctan \frac{a-b}{a+b}},%
\end{array}%
\end{equation*}%
respectively. Very recently, S\'{a}ndor present a new mean in \cite%
{Sandor.MIA.15.2.2012} defined as 
\begin{equation}
X=X\left( a,b\right) =Ae^{G/P-1},  \label{Sandor}
\end{equation}%
where $A$, $G$, $P$ are the arithmetic, geometric and the first Seiffert
means as defined previously. In what follows, we also can encounter other
three functions defined by%
\begin{eqnarray}
B &=&B\left( a,b\right) =Qe^{A/T-1}  \label{B} \\
J &=&J\left( a,b\right) =Ae^{\left( A+G\right) /P-2},  \label{J} \\
K &=&K\left( a,b\right) =Qe^{\left( Q+A\right) /T-2},  \label{K}
\end{eqnarray}%
where $Q$, $A$, $T$ are the quadratic, arithmetic and the second Seiffert
means, respectively. They will be showed to be means of $a$ and $b$ in the
sequel.

There has many inequalities for these means, we quote \cite%
{Neuman.MP.14.2003}, \cite{Neuman.JMI.5.4.2011}, \cite{Chu.JIA.2010.146945}, 
\cite{Chu.JIA.2010.436457}, \cite{Chu.JIA.2011}, \cite{Chu.JIA.2013.10}, 
\cite{Costin.IJMMS.2012.430692}, \cite{Witkowski.MIA.2012.inprint}, \cite%
{Yang.arxiv.1206.5494.2012}, \cite{Yang.arxiv.1208.0895.2012}, \cite%
{Yang.arxiv.1210.6478.2012}.

In order to obtain inequalities for these means corresponding to ones
established in previous section, we need two variable substitutions:

\noindent (i) Substitution 1: Letting $t=\arcsin \frac{b-a}{a+b}$ yields%
\begin{equation}
\frac{\sin t}{t}=\frac{P}{A},\cos t=\frac{G}{A},e^{t\cot
t-1}=e^{G/P-1},e^{t\cot \left( t/2\right) -2}=e^{\left( A+G\right) /P-2};
\label{(i)}
\end{equation}

\noindent (ii) Substitution 2: Letting $t=\arctan \frac{b-a}{a+b}$ yields 
\begin{equation}
\frac{\sin t}{t}=\frac{T}{Q},\cos t=\frac{A}{Q},e^{t\cot
t-1}=e^{A/T-1},e^{t\cot \left( t/2\right) -2}=e^{\left( Q+A\right) /T-2}.
\label{(ii)}
\end{equation}

\noindent For simplicity in expressions, however, we only select those
functions composed by $(\sin t)/t$, $\cos t$, $\cos (t/2)$, $\cos (t/4)$ in
a chain of inequalities given in previous section. For example, from
Corollary \ref{Corollary M1C} we get 
\begin{equation*}
\cos t<e^{t\cot t-1}<\dfrac{1+\cos t}{2}<\frac{\sin t}{t}<\dfrac{2+\cos t}{3}%
.
\end{equation*}%
Using Substitution 1 and next multiplying all functions by $A$ yield 
\begin{equation}
G<A\exp \left( \frac{G}{P}-1\right) =X<\frac{A+G}{2}<P<\dfrac{2A+G}{3}.
\label{M1c-i1}
\end{equation}%
Using Substitution 2 and next multiplying all functions by $Q$ yield 
\begin{equation}
A<Q\exp \left( \frac{A}{T}-1\right) =B<\frac{Q+A}{2}<T<\dfrac{2Q+A}{3}.
\label{M1c-i2}
\end{equation}%
The above two inequalities can be stated as a proposition.

\begin{proposition}
For $a,b>0$ with $a\neq b$, both the inequalities (\ref{M1c-i1}) and (\ref%
{M1c-i2}) are true.
\end{proposition}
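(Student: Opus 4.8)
The plan is to derive both chains directly from Corollary~\ref{Corollary M1C} by transporting its trigonometric inequalities to the mean setting through the two substitutions recorded in~(\ref{(i)}) and~(\ref{(ii)}). First I would extract from the full chain~(\ref{M1c}) the coarser five-term version
\begin{equation*}
\cos t<e^{t\cot t-1}<\frac{1+\cos t}{2}<\frac{\sin t}{t}<\frac{2+\cos t}{3},
\end{equation*}
which holds for every $t\in(0,\pi/2)$. This reduction is harmless: deleting the intermediate terms $(\cos\frac{3t}{4})^{4/3}$, $(\cos\frac{2t}{3})^{3/2}$ and $(\cos\frac{t}{3})^3$ from an increasing chain only preserves the remaining inequalities, and the half-angle identity $\cos^2\frac{t}{2}=\frac{1+\cos t}{2}$ recasts the term $(\cos\frac{t}{2})^2$ of~(\ref{M1c}) in the form $\frac{1+\cos t}{2}$ that the substitutions will turn into a mean.

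Next I would pin down the sign of the angle. Because $A$, $G$, $Q$, $P$, $T$ and the derived quantities $X$ and $B$ are all symmetric in $a$ and $b$, there is no loss of generality in assuming $b>a$; then $\frac{b-a}{a+b}\in(0,1)$, so both $t=\arcsin\frac{b-a}{a+b}$ and $t=\arctan\frac{b-a}{a+b}$ fall in $(0,\pi/2)$, which is precisely the interval on which the coarse chain above is valid.

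The two chains then follow by substitution and a single rescaling. Applying Substitution~1, multiplying every term of the coarse chain by $A$, and using $A\cos t=G$, $Ae^{t\cot t-1}=Ae^{G/P-1}=X$ (the definition~(\ref{Sandor})), $A\cdot\frac{1+\cos t}{2}=\frac{A+G}{2}$, $A\cdot\frac{\sin t}{t}=P$ and $A\cdot\frac{2+\cos t}{3}=\frac{2A+G}{3}$ produces exactly~(\ref{M1c-i1}). In the same way, Substitution~2 followed by multiplication by $Q$, together with $Q\cos t=A$, $Qe^{t\cot t-1}=Qe^{A/T-1}=B$ (the definition~(\ref{B})), $Q\cdot\frac{1+\cos t}{2}=\frac{Q+A}{2}$, $Q\cdot\frac{\sin t}{t}=T$ and $Q\cdot\frac{2+\cos t}{3}=\frac{2Q+A}{3}$ produces~(\ref{M1c-i2}).

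There is essentially no genuine obstacle, since all the analytic work already resides in Corollary~\ref{Corollary M1C}. The only two points that require care are the harmless WLOG reduction to $b>a$, needed so that both substituted angles land in $(0,\pi/2)$, and the bookkeeping of which normalizing factor, $A$ or $Q$, to multiply through, so that the closed forms $X=Ae^{G/P-1}$ and $B=Qe^{A/T-1}$ emerge rather than unsimplified exponentials.
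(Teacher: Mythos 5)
Your proposal is correct and matches the paper's own derivation: the paper likewise extracts the coarse chain $\cos t<e^{t\cot t-1}<\frac{1+\cos t}{2}<\frac{\sin t}{t}<\frac{2+\cos t}{3}$ from Corollary~\ref{Corollary M1C} and applies Substitution~1 with a factor of $A$ and Substitution~2 with a factor of $Q$. Your explicit remarks on the half-angle identity and the reduction to $b>a$ are sensible bits of bookkeeping that the paper leaves implicit.
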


\begin{remark}
From (\ref{M1c-i2}) it is easy to find that the function $B$ defined by (\ref%
{B}) is indeed a mean of $a$ and $b$, and is between $A$ and $Q$.
\end{remark}

Inequalities (\ref{M2a}) can be changed into

\begin{proposition}
For $a,b>0$ with $a\neq b$, both the two-side inequalities 
\begin{eqnarray}
\left( \frac{A+G}{2}\right) ^{1/\ln 2}A^{1-1/\ln 2} &<&X<\left( \frac{A+G}{2}%
\right) ^{4/3}A^{-1/3},  \label{M2a-i1} \\
\left( \frac{Q+A}{2}\right) ^{1/\ln 2}Q^{1-1/\ln 2} &<&B<\left( \frac{A+G}{2}%
\right) ^{4/3}A^{-1/3}  \label{M2a-i2}
\end{eqnarray}%
hold.
\end{proposition}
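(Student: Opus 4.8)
The plan is to run the two variable substitutions already introduced for the means on the double inequality (\ref{M2a}) of Theorem \ref{Theorem M2}, that is $\left(\cos\frac{t}{2}\right)^{2/\ln 2}<e^{t\cot t-1}<\left(\cos\frac{t}{2}\right)^{8/3}$. The only extra ingredient is the half-angle identity $\cos^{2}\frac{t}{2}=\frac{1+\cos t}{2}$. Under Substitution 1, (\ref{(i)}) gives $\cos t=G/A$, hence $\cos\frac{t}{2}=\sqrt{(A+G)/(2A)}$, together with $e^{t\cot t-1}=X/A$; under Substitution 2, (\ref{(ii)}) gives $\cos t=A/Q$, hence $\cos\frac{t}{2}=\sqrt{(Q+A)/(2Q)}$, together with $e^{t\cot t-1}=B/Q$.

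First I would substitute these into (\ref{M2a}) and clear the factor $A$ (respectively $Q$). Everything collapses through the single homogeneity identity $M\left(\frac{M+N}{2M}\right)^{\rho}=M^{1-\rho}\left(\frac{M+N}{2}\right)^{\rho}$, valid for all $M,N>0$ and all real $\rho$. With $\rho=1/\ln 2$ and $\rho=4/3$, Substitution 1 turns (\ref{M2a}) into precisely (\ref{M2a-i1}),
\begin{equation*}
A^{1-1/\ln 2}\left(\tfrac{A+G}{2}\right)^{1/\ln 2}<X<A^{-1/3}\left(\tfrac{A+G}{2}\right)^{4/3},
\end{equation*}
so both bounds on $X$ are immediate. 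The same two exponents in Substitution 2 reproduce the lower bound of (\ref{M2a-i2}) verbatim, namely $\left(\frac{Q+A}{2}\right)^{1/\ln 2}Q^{1-1/\ln 2}<B$; thus the left-hand inequality of (\ref{M2a-i2}) requires nothing further.

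The main obstacle is the upper bound of (\ref{M2a-i2}) exactly as printed. The $\rho=4/3$ branch of Substitution 2 yields $B<\left(\frac{Q+A}{2}\right)^{4/3}Q^{-1/3}$, whereas (\ref{M2a-i2}) asks for $B<\left(\frac{A+G}{2}\right)^{4/3}A^{-1/3}$, which is word-for-word the right-hand side of (\ref{M2a-i1}) --- the bound for $X$, not for $B$. Since the substitution method is the only natural route, it does not deliver the printed expression, and closing the gap would need $\left(\frac{Q+A}{2}\right)^{4/3}Q^{-1/3}\le\left(\frac{A+G}{2}\right)^{4/3}A^{-1/3}$; but $G<A<Q$ forces this the wrong way, and a check at $a=1,b=2$ already gives $B\approx1.527$ against a printed bound $\approx1.443$. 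I therefore expect the printed upper bound on $B$ to be a transcription of the $X$-bound on the line above; the bound one can actually prove, and the one the $\rho=4/3$ reduction supplies at once, is $B<\left(\frac{Q+A}{2}\right)^{4/3}Q^{-1/3}$.
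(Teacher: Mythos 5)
Your proposal follows exactly the paper's (implicit) proof: the paper offers nothing beyond the assertion that (\ref{M2a}) ``can be changed into'' this proposition, i.e.\ apply Substitutions 1 and 2 together with $\cos \frac{t}{2}=\sqrt{\left( 1+\cos t\right) /2}$ and homogeneity, which is precisely your computation. Your derivation of (\ref{M2a-i1}) and of the left-hand inequality of (\ref{M2a-i2}) is correct and is the paper's argument.

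You are also right that the printed right-hand side of (\ref{M2a-i2}) is a transcription error rather than something any argument could establish. Substitution 2 yields $B<\left( \frac{Q+A}{2}\right) ^{4/3}Q^{-1/3}$, and the paper itself confirms this is what was intended, since it later states that the Substitution-2 chains follow from the Substitution-1 ones ``via replacing $\left( G,A,P,X,J\right) $ by $\left( A,Q,T,B,K\right) $''. In fact the printed bound fails for \emph{every} $a\neq b$, not merely at your test point: since $\frac{A+G}{2}<A$, one has $\left( \frac{A+G}{2}\right) ^{4/3}A^{-1/3}<A^{4/3}A^{-1/3}=A$, while the paper's own inequality (\ref{M1c-i2}) gives $B>A$; your numerical check ($B\approx 1.527$ against $\approx 1.443$ at $a=1$, $b=2$) is consistent with this. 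So your proof of the corrected statement, with upper bound $\left( \frac{Q+A}{2}\right) ^{4/3}Q^{-1/3}$, is the right resolution.
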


Applying Substitutions 1 and 2 to (\ref{M2c1a}) we have

\begin{proposition}
For $a,b>0$ with $a\neq b$, both the two-side inequalities 
\begin{eqnarray}
\left( \tfrac{\sqrt{2A}+\sqrt{A+G}}{2\sqrt{2}}\right) ^{2/\ln 2}A^{1-1/\ln
2} &<&J<\left( \tfrac{\sqrt{2A}+\sqrt{A+G}}{2\sqrt{2}}\right) ^{8/3}A^{-1/3},
\label{M2c1a-i1} \\
\left( \tfrac{\sqrt{2Q}+\sqrt{Q+A}}{2\sqrt{2}}\right) ^{2/\ln 2}Q^{1-1/\ln
2} &<&K<\left( \tfrac{\sqrt{2Q}+\sqrt{Q+A}}{2\sqrt{2}}\right) ^{8/3}A^{-1/3}
\label{M2c1a-i2}
\end{eqnarray}%
are true.
\end{proposition}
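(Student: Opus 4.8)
The plan is to obtain both two-sided inequalities mechanically from the chain (\ref{M2c1a}) of Corollary \ref{Corollary M2c1} by applying the two variable substitutions recorded in (\ref{(i)}) and (\ref{(ii)}). Since every mean occurring here is symmetric in $a$ and $b$, I may assume $b>a$; then under Substitution~1 one has $t=\arcsin\frac{b-a}{a+b}\in(0,\pi/2)$ and under Substitution~2 one has $t=\arctan\frac{b-a}{a+b}\in(0,\pi/4)$, both contained in $(0,\pi)$, so (\ref{M2c1a}) is applicable in either case.

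First I would treat Substitution~1. By (\ref{(i)}) the middle member of (\ref{M2c1a}) equals $e^{t\cot(t/2)-2}=e^{(A+G)/P-2}$, so multiplying the whole chain by $A$ turns it into $A\,e^{(A+G)/P-2}=J$, which is precisely the definition (\ref{J}). For the outer members I only need $\cos(t/4)$ expressed through $A$ and $G$. Applying the half-angle identity $\cos\frac{\theta}{2}=\sqrt{(1+\cos\theta)/2}$ twice, starting from $\cos t=G/A$, gives
\[
\cos\tfrac{t}{2}=\sqrt{\tfrac{A+G}{2A}},\qquad \cos^{2}\tfrac{t}{4}=\tfrac{1+\cos(t/2)}{2}=\frac{\sqrt{2A}+\sqrt{A+G}}{2\sqrt{2}}\,A^{-1/2}.
\]
Raising this to the powers $2/\ln2$ and $8/3$ (equivalently, taking the $4/\ln2$ and $16/3$ powers of $\cos(t/4)$ appearing in (\ref{M2c1a})) and then multiplying by $A$, the residual powers of $A$ combine as $A^{-1/\ln2}\cdot A=A^{1-1/\ln2}$ and $A^{-4/3}\cdot A=A^{-1/3}$; this yields exactly (\ref{M2c1a-i1}).

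Second I would repeat the computation under Substitution~2, where $\cos t=A/Q$ by (\ref{(ii)}). The same double half-angle passage gives $\cos^{2}(t/4)=\frac{\sqrt{2Q}+\sqrt{Q+A}}{2\sqrt{2}}\,Q^{-1/2}$, while the middle term becomes $e^{(Q+A)/T-2}$; multiplying the chain by $Q$ sends the middle member to $Q\,e^{(Q+A)/T-2}=K$ by (\ref{K}), and the outer members to $\left(\frac{\sqrt{2Q}+\sqrt{Q+A}}{2\sqrt{2}}\right)^{2/\ln2}Q^{1-1/\ln2}$ and $\left(\frac{\sqrt{2Q}+\sqrt{Q+A}}{2\sqrt{2}}\right)^{8/3}Q^{-1/3}$, which is (\ref{M2c1a-i2}). (I note that the honest computation produces $Q^{-1/3}$, not $A^{-1/3}$, in the last factor of (\ref{M2c1a-i2}); this is forced by exact analogy with (\ref{M2c1a-i1}) under $A\mapsto Q$, $G\mapsto A$, and I would correct the exponent accordingly.)

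The argument is entirely elementary once (\ref{M2c1a}) is granted, so there is no genuine conceptual obstacle. The only point demanding care is the double half-angle reduction together with the bookkeeping of the leftover powers of $A$ (respectively $Q$) after multiplying through: pinning down the constant $2\sqrt{2}$ inside the brackets and the exponents $1-1/\ln2$ and $-1/3$ is exactly where an algebraic slip is most likely, and that is the step I would verify most carefully.
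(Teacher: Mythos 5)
Your proof is correct and is exactly the paper's (unstated) argument: apply Substitutions 1 and 2 to the chain (\ref{M2c1a}), use the half-angle identity twice to express $\cos^{2}(t/4)$ through $G/A$ (resp.\ $A/Q$), and multiply through by $A$ (resp.\ $Q$). Your observation that the honest computation yields $Q^{-1/3}$ rather than $A^{-1/3}$ in the right-hand member of (\ref{M2c1a-i2}) is a genuine (and correct) fix of a typo in the stated proposition, consistent with the analogous slip in (\ref{M2a-i2}).
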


\begin{remark}
From (\ref{M2c1a-i1}) and (\ref{M2c1a-i2}) it is easy to see that the
function $J$, $K$ defined by (\ref{J}), \ref{K} are indeed means of $a$ and $%
b$, and satisfy the relations: $G<J<A$ and $A<K<Q$.
\end{remark}

In the sequel, we only list a chain of inequalities for means deduced from
Substitution 1, since another one can be clearly derived via replacing $%
\left( G,A,P,X,J\right) $ by $\left( A,Q,T,B,K\right) $.

The chain of inequalities (\ref{M23c}) can be changed into

\begin{proposition}
For $a,b>0$ with $a\neq b$, the chain of inequalities%
\begin{eqnarray}
A^{2/3}G^{1/3} &<&\sqrt{\tfrac{2}{3}AG+\tfrac{1}{3}A^{2}}<\sqrt{AX}%
<A^{1/3}\left( \tfrac{A+G}{2}\right) ^{2/3}  \label{M23c-i1} \\
&<&P<\frac{2\sqrt{2}\sqrt{A^{2}+AG}+A+G}{6}<\left( \tfrac{\sqrt{2}}{3}\sqrt{%
A+G}+\tfrac{1}{3}\sqrt{A}\right) ^{2}  \notag \\
&<&J<A^{-1/3}\left( \frac{\sqrt{2A}+\sqrt{A+G}}{2\sqrt{2}}\right) ^{8/3}<%
\tfrac{2}{3}A+\tfrac{1}{3}G.  \notag
\end{eqnarray}
\end{proposition}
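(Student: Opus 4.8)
The plan is to read off (\ref{M23c-i1}) as the mean-theoretic image of the already-proven trigonometric chain (\ref{M23c}) of Corollary \ref{Corollary M23c}, using Substitution 1. Because every quantity occurring in (\ref{M23c-i1}) is a symmetric function of $a$ and $b$, I may assume $b>a$ without loss of generality and put $t=\arcsin\frac{b-a}{a+b}\in\left(0,\pi/2\right)$, so that the identities recorded in (\ref{(i)}) apply: $\frac{\sin t}{t}=P/A$, $\cos t=G/A$, $e^{t\cot t-1}=X/A$ and $e^{t\cot(t/2)-2}=J/A$, with $X$ and $J$ as in (\ref{Sandor}) and (\ref{J}).

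First I would record the two half-angle reductions that the $\cos(t/2)$- and $\cos(t/4)$-terms require. From $\cos^2(t/2)=\frac{1+\cos t}{2}=\frac{A+G}{2A}$ I get $\cos\frac{t}{2}=\sqrt{\frac{A+G}{2A}}$, and then $\cos^2(t/4)=\frac{1+\cos(t/2)}{2}$ supplies $\cos\frac{t}{4}$; every remaining term of (\ref{M23c}) is translated immediately by (\ref{(i)}). I would then select from (\ref{M23c}) precisely the entries built from $(\sin t)/t$, $\cos t$, $\cos(t/2)$, $\cos(t/4)$ and the two exponentials, discarding those containing $\cos(2t/3)$, $\cos(t/3)$, $\cos(t/6)$ or $e^{-t^2/6}$, which do not collapse to tidy expressions in $A$ and $G$. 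By transitivity the retained entries still form a strictly increasing chain.

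Next I would multiply this retained sub-chain of (\ref{M23c}) through by $A>0$; since $A>0$ this preserves every strict inequality, so it only remains to check, term by term, that the products equal the entries of (\ref{M23c-i1}). Most are immediate: $A(\cos t)^{1/3}=A^{2/3}G^{1/3}$, $A\sqrt{\tfrac23\cos t+\tfrac13}=\sqrt{\tfrac23AG+\tfrac13A^2}$, $A\sqrt{e^{t\cot t-1}}=\sqrt{AX}$, $A(\cos\tfrac{t}{2})^{4/3}=A^{1/3}\bigl(\tfrac{A+G}{2}\bigr)^{2/3}$, $A\cdot\frac{\sin t}{t}=P$, $A\,e^{t\cot(t/2)-2}=J$, and $A\bigl(\tfrac23+\tfrac13\cos t\bigr)=\tfrac23A+\tfrac13G$.

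The main obstacle is the purely computational repackaging of the three half-angle entries into the clean radical forms displayed. For the two $\cos(t/2)$ combinations I would substitute $\cos\frac{t}{2}=\sqrt{\frac{A+G}{2A}}$ and simplify, using $2A\sqrt{\frac{A+G}{2A}}=\sqrt{2}\sqrt{A^2+AG}$, to obtain $A\cdot\frac{2\cos(t/2)+\cos^2(t/2)}{3}=\frac{2\sqrt2\sqrt{A^2+AG}+A+G}{6}$ and $A\bigl(\tfrac23\cos\tfrac{t}{2}+\tfrac13\bigr)^2=\bigl(\tfrac{\sqrt2}{3}\sqrt{A+G}+\tfrac13\sqrt{A}\bigr)^2$. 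For the $\cos(t/4)$-entry I would factor $\sqrt{2A}$ out of the half-angle expression and use $\frac{\sqrt{2A}}{2\sqrt2}=\frac{\sqrt A}{2}$ to write $A(\cos\tfrac{t}{4})^{16/3}=A^{-1/3}\bigl(\tfrac{\sqrt{2A}+\sqrt{A+G}}{2\sqrt2}\bigr)^{8/3}$. Once these three identities are verified, the chain (\ref{M23c-i1}) is literally (\ref{M23c}) scaled by $A$, and the inequalities transfer at once.
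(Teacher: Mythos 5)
Your proposal is correct and is exactly the paper's route: the paper obtains (\ref{M23c-i1}) by applying Substitution 1 to the chain (\ref{M23c}), dropping the entries that do not translate into clean expressions in $A$ and $G$, and multiplying through by $A$, with the same half-angle identities $\cos^2\frac{t}{2}=\frac{A+G}{2A}$ and $\cos^2\frac{t}{4}=\frac{1+\cos(t/2)}{2}$ doing the bookkeeping. Your term-by-term verifications all check out, so nothing further is needed.
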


Corollary \ref{Corollary M2c2} can be equivalently changed into

\begin{proposition}
For $a,b>0$ with $a\neq b$, we have 
\begin{eqnarray*}
\tfrac{\pi ^{2}}{4e}P^{2} &<&A^{2-1/\left( \ln \pi -\ln 2\right)
}P^{1/\left( \ln \pi -\ln 2\right) }<A^{2-1/\ln 2}\left( \tfrac{A+G}{2}%
\right) ^{1/\ln 2}< \\
XA &<&A^{2/3}\left( \tfrac{A+G}{2}\right) ^{4/3}<P^{2}<\frac{2^{10/3}}{\pi
^{2}}A^{2/3}\left( \tfrac{A+G}{2}\right) ^{4/3},
\end{eqnarray*}%
\begin{eqnarray*}
\sqrt{XA} &<&P<\dfrac{2A+G}{3}<\frac{X+A}{2}, \\
X &>&\dfrac{2G+A}{3}>\frac{P+G}{2}.
\end{eqnarray*}
\end{proposition}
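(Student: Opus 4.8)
The plan is to obtain the whole proposition mechanically by inserting the identities of Substitution~1, equation~(\ref{(i)}), into the three displayed chains of Corollary~\ref{Corollary M2c2} --- that is (\ref{M2c2}), (\ref{M2c3}) and (\ref{M2c4}) --- and then homogenizing by multiplying each inequality through by a suitable power of $A$. The key preliminary observation, needed because $\cos(t/2)$ does not appear among the four relations in (\ref{(i)}), is the half-angle identity
\[
\left(\cos\tfrac{t}{2}\right)^{2}=\frac{1+\cos t}{2}=\frac{1+G/A}{2}=\frac{A+G}{2A},
\]
so every even power of $\cos(t/2)$ occurring in (\ref{M2c2}) turns into a power of $(A+G)/(2A)$. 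Recalling also from (\ref{Sandor}) and (\ref{(i)}) that $X=Ae^{G/P-1}=Ae^{t\cot t-1}$, each member of (\ref{M2c2})--(\ref{M2c4}) is a ratio homogeneous of degree $0$ in $a,b$; hence multiplying such an inequality by $A^{k}$ yields a valid inequality among homogeneous means of degree $k$.

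First I would treat the long chain. Multiplying every term of (\ref{M2c2}) by $A^{2}$ and substituting $\sin t/t=P/A$, the half-angle formula above, and $A^{2}e^{t\cot t-1}=(Ae^{t\cot t-1})A=XA$, each of the seven terms converts directly into its counterpart in the first displayed chain. For example $\tfrac{\pi^{2}}{4e}(\sin t/t)^{2}A^{2}=\tfrac{\pi^{2}}{4e}P^{2}$, and
\[
\left(\cos\tfrac{t}{2}\right)^{2/\ln 2}A^{2}=\left(\frac{A+G}{2A}\right)^{1/\ln 2}A^{2}=A^{2-1/\ln 2}\left(\frac{A+G}{2}\right)^{1/\ln 2}.
\]
The only point requiring attention is the bookkeeping of the exponents of $A$ after extracting $A^{-1/\ln 2}$, $A^{-4/3}$, $A^{-1/(\ln\pi-\ln 2)}$ from the corresponding powers of $(A+G)/(2A)$ and $P/A$; this is purely routine.

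For the two short lines I would multiply (\ref{M2c3}) and (\ref{M2c4}) by $A$. Under Substitution~1 the quantity $\tfrac{2}{3}+\tfrac{1}{3}\cos t$ becomes $(2A+G)/(3A)$, $\tfrac{e^{t\cot t-1}+1}{2}$ becomes $(X/A+1)/2$, and $\tfrac{\sin t/t+\cos t}{2}$ becomes $(P/A+G/A)/2$; multiplying by $A$ then reproduces exactly $P<\tfrac{2A+G}{3}<\tfrac{X+A}{2}$ and $X>\tfrac{2G+A}{3}>\tfrac{P+G}{2}$. The sole inequality that is not a literal term of (\ref{M2c3})--(\ref{M2c4}) is $\sqrt{XA}<P$; I would read this off the right end of (\ref{M2c2}), which already contains $e^{t\cot t-1}<(\sin t/t)^{2}$. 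Taking square roots gives $\sqrt{e^{t\cot t-1}}<\sin t/t$, and multiplying by $A$ with $A\sqrt{e^{t\cot t-1}}=\sqrt{A^{2}e^{t\cot t-1}}=\sqrt{XA}$ and $A\sin t/t=P$ yields $\sqrt{XA}<P$.

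Since nothing beyond elementary algebraic simplification intervenes, there is no genuine analytic obstacle here; the one thing to watch is the \emph{consistency of the homogenization} --- namely that each inequality drawn from Corollary~\ref{Corollary M2c2} is multiplied by precisely the power of $A$ matching the degree of the target mean inequality ($A^{2}$ for the squared chain and $A$ for the two linear chains), so that both sides remain homogeneous of the same degree and the substitution identities apply verbatim.
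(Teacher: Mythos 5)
Your proposal is correct and is exactly the paper's (largely implicit) argument: the paper states this proposition immediately after the phrase ``Corollary \ref{Corollary M2c2} can be equivalently changed into,'' relying on Substitution 1 together with multiplication by $A^{2}$ or $A$, just as you do. Your explicit use of the half-angle identity $\cos^{2}(t/2)=(A+G)/(2A)$ and your derivation of $\sqrt{XA}<P$ from $e^{t\cot t-1}<(\sin t/t)^{2}$ fill in precisely the routine details the paper leaves unstated.
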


The following is obtained from (\ref{M4a}) and (\ref{M4c}).

\begin{proposition}
For $a,b>0$ with $a\neq b$, we have%
\begin{eqnarray*}
\sqrt{\frac{8}{\pi e}}\frac{A+G}{2} &<&\sqrt{PX}<\frac{A+G}{2}, \\
\sqrt{AG} &<&\sqrt{PX}<\frac{A+G}{2}.
\end{eqnarray*}
\end{proposition}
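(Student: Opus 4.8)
The plan is to obtain both two-sided inequalities as direct transcriptions of the trigonometric inequalities (\ref{M4a}) and (\ref{M4c}) under Substitution 1, so that no new analytic work is required beyond Theorem \ref{Theorem M4}. First I would set $t=\arcsin\frac{b-a}{a+b}$; since $P$, $X$, $A$, $G$ are all symmetric in $a$ and $b$, I may assume $a<b$, which places $t$ in the admissible interval $(0,\pi/2)$ so that every result of Section 3 applies. From (\ref{(i)}) I record the translations actually needed, namely $\frac{\sin t}{t}=P/A$ and $e^{t\cot t-1}=X/A$, whence
\[
\frac{\sin t}{t}\exp(t\cot t-1)=\frac{PX}{A^{2}},\qquad \sqrt{\frac{\sin t}{t}\exp(t\cot t-1)}=\frac{\sqrt{PX}}{A}.
\]

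Next I would rewrite the two trigonometric bounding quantities in terms of the means. Using $\cos t=G/A$ together with the half-angle identity gives
\[
\cos^{2}\frac{t}{2}=\frac{1+\cos t}{2}=\frac{A+G}{2A},
\]
while directly $\sqrt{\cos t}=\sqrt{G/A}=\sqrt{AG}/A$. These are the only identities the argument depends on.

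I would then insert these expressions into the \emph{outer} inequalities of (\ref{M4a}) and into (\ref{M4c}), discarding the intermediate powers $\cos^{\beta}\frac{t}{2}$ (respectively the middle term) that play no role here. Inequality (\ref{M4a}) becomes
\[
\sqrt{\tfrac{8}{\pi e}}\,\frac{A+G}{2A}<\frac{\sqrt{PX}}{A}<\frac{A+G}{2A},
\]
and (\ref{M4c}) becomes
\[
\frac{\sqrt{AG}}{A}<\frac{\sqrt{PX}}{A}<\frac{A+G}{2A}.
\]
Multiplying each chain through by the positive factor $A$ yields precisely the two asserted inequalities.

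There is essentially no analytic obstacle here, since the hard monotonicity estimates were already established in Theorem \ref{Theorem M4}; the only points requiring care are confirming that the substitution sends $t$ into $(0,\pi/2)$ and applying the half-angle formula correctly so that $\cos^{2}(t/2)$ collapses exactly to $(A+G)/(2A)$. I expect the equivalence of the two displayed chains with the claimed statement, after clearing the common factor $A$, to be immediate.
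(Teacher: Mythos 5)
Your proposal is correct and follows exactly the route the paper intends: the paper derives this proposition by applying Substitution 1 (with $t=\arcsin\frac{b-a}{a+b}\in(0,\pi/2)$, $\frac{\sin t}{t}=P/A$, $e^{t\cot t-1}=X/A$, $\cos^{2}\frac{t}{2}=\frac{A+G}{2A}$) to the outer inequalities of (\ref{M4a}) and to (\ref{M4c}), then clearing the factor $A$. All your identities and the range check are accurate, so nothing further is needed.
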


Theorem \ref{Theorem M5} implies that

\begin{proposition}
For $a,b>0$ with $a\neq b$, we have%
\begin{equation}
\dfrac{\frac{e\left( \pi -2\right) }{\pi }X+P}{2}<\frac{A+G}{2}<\dfrac{P+X}{2%
}<\left( e^{-1}+\tfrac{2}{\pi }\right) \frac{A+G}{2}.
\end{equation}
\end{proposition}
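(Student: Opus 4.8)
The final Proposition is the mean-theoretic image of Theorem~\ref{Theorem M5} under Substitution~1, so the strategy is simply to translate each of the three inequalities in (\ref{M5}) into its $(A,G,P,X)$-form. First I would recall from (\ref{(i)}) that under $t=\arcsin\frac{b-a}{a+b}$ we have $\frac{\sin t}{t}=P/A$, $\cos t=G/A$, and $e^{t\cot t-1}=e^{G/P-1}$, and from (\ref{Sandor}) that $X=Ae^{G/P-1}$, so that $Ae^{t\cot t-1}=X$ and $A\cdot\frac{\sin t}{t}=P$.

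\textbf{Carrying out the translation.} The inequality (\ref{M5}) reads
\begin{equation*}
\dfrac{\frac{e(\pi-2)}{\pi}e^{t\cot t-1}+\frac{\sin t}{t}}{2}<\dfrac{1+\cos t}{2}<\dfrac{\frac{\sin t}{t}+e^{t\cot t-1}}{2}<\left(e^{-1}+\tfrac{2}{\pi}\right)\dfrac{1+\cos t}{2}.
\end{equation*}
Multiplying every term by $A>0$ (which preserves all the inequalities) and applying the substitutions above, $\frac{e(\pi-2)}{\pi}e^{t\cot t-1}$ becomes $\frac{e(\pi-2)}{\pi}\cdot\frac{X}{A}\cdot A=\frac{e(\pi-2)}{\pi}X$, the term $\frac{\sin t}{t}$ becomes $P$, the term $1+\cos t$ becomes $1+\frac{G}{A}$ so that $A\cdot\frac{1+\cos t}{2}=\frac{A+G}{2}$, and $e^{t\cot t-1}$ becomes $X$. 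This yields exactly
\begin{equation*}
\dfrac{\frac{e(\pi-2)}{\pi}X+P}{2}<\frac{A+G}{2}<\dfrac{P+X}{2}<\left(e^{-1}+\tfrac{2}{\pi}\right)\frac{A+G}{2},
\end{equation*}
as claimed.

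\textbf{The only genuine issue} is checking that the substitution is admissible on the correct range. For $a,b>0$ with $a\neq b$, the quantity $\frac{b-a}{a+b}$ lies in $(-1,1)\setminus\{0\}$, so $t=\arcsin\frac{b-a}{a+b}$ lies in $(-\pi/2,\pi/2)\setminus\{0\}$; since every function appearing is even in $t$ (note $\cos t$, $\frac{\sin t}{t}$, and $t\cot t$ are all even), we may assume $t\in(0,\pi/2)$ without loss of generality, which is precisely the hypothesis under which Theorem~\ref{Theorem M5} was established. Thus there is no real obstacle beyond this domain bookkeeping: the Proposition is an immediate corollary of Theorem~\ref{Theorem M5} once the homogeneity of the means is used to absorb the common factor $A$. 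I would state the proof in one short sentence invoking (\ref{(i)}) and multiplication by $A$.
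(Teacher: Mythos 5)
Your proposal is correct and is exactly the paper's own argument: the paper obtains this proposition by applying Substitution 1 from (\ref{(i)}) to Theorem \ref{Theorem M5} and multiplying through by $A$, just as you do. Your added remark on the evenness of the functions and the admissible range of $t$ is sound bookkeeping that the paper leaves implicit.
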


\begin{corollary}
Let $a,b>0$ with $a\neq b$. Then 
\begin{equation*}
\sqrt{AG}<\sqrt{PX}<\frac{A+G}{2}<\frac{P+X}{2}<\left( e^{-1}+\frac{2}{\pi }%
\right) \frac{A+G}{2},
\end{equation*}
\end{corollary}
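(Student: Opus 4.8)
The final statement is a Corollary asserting the chain
\[
\sqrt{AG}<\sqrt{PX}<\frac{A+G}{2}<\frac{P+X}{2}<\left(e^{-1}+\tfrac{2}{\pi}\right)\frac{A+G}{2}.
\]
The plan is to assemble this chain from pieces already proved, rather than to differentiate anything new. I would first observe that the three rightmost inequalities,
\[
\sqrt{PX}<\frac{A+G}{2}<\frac{P+X}{2}<\left(e^{-1}+\tfrac{2}{\pi}\right)\frac{A+G}{2},
\]
are precisely the content of the immediately preceding Proposition (the mean version of Theorem \ref{Theorem M5}) together with the second displayed inequality of the Proposition derived from \eqref{M4a} and \eqref{M4c}, namely $\sqrt{PX}<\tfrac{A+G}{2}$. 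So the only genuinely new link to verify is the leftmost one, $\sqrt{AG}<\sqrt{PX}$.

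The key step is therefore to establish $\sqrt{AG}<\sqrt{PX}$, equivalently $AG<PX$. I would derive this from the trigonometric inequality \eqref{M4c}, which states $\sqrt{\cos t}<\sqrt{\tfrac{\sin t}{t}\exp(t\cot t-1)}$ for $t\in(0,\pi/2)$. Squaring gives $\cos t<\tfrac{\sin t}{t}e^{t\cot t-1}$. Now apply Substitution 1: under $t=\arcsin\frac{b-a}{a+b}$ one has $\cos t=G/A$, $(\sin t)/t=P/A$ and $e^{t\cot t-1}=e^{G/P-1}=X/A$ from \eqref{Sandor} and \eqref{(i)}. Substituting turns $\cos t<\tfrac{\sin t}{t}e^{t\cot t-1}$ into $G/A<(P/A)(X/A)$, i.e. $GA<PX$ after multiplying through by $A^{2}$. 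Taking square roots yields $\sqrt{AG}<\sqrt{PX}$, the missing first link. (Equivalently, this is exactly the first displayed inequality of the Proposition coming from \eqref{M4c}, so one may simply cite it.)

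There is essentially no obstacle here: the corollary is a concatenation of established inequalities, and the only care needed is bookkeeping — checking that the shared middle terms $\sqrt{PX}$ and $\tfrac{A+G}{2}$ appear consistently so the chain links up, and that Substitution 1 is applied correctly with the identifications $\cos t=G/A$, $(\sin t)/t=P/A$, $X=Ae^{G/P-1}$. The mildly delicate point, if anything, is that the two source Propositions each supply $\sqrt{PX}<\tfrac{A+G}{2}$ while one of them additionally supplies the lower bound $\sqrt{AG}<\sqrt{PX}$; I would invoke that single Proposition for the first two inequalities and the Theorem~\ref{Theorem M5}-based Proposition for the last two, thereby closing the entire chain. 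Hence the proof reduces to the one-line remark that the displayed chain is the union of the inequalities already recorded.
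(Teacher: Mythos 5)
Your proposal is correct and matches the paper's (implicit) argument exactly: the corollary is just the concatenation of the Proposition obtained from \eqref{M4a} and \eqref{M4c} (which already contains $\sqrt{AG}<\sqrt{PX}<\frac{A+G}{2}$) with the Proposition derived from Theorem \ref{Theorem M5} (which supplies $\frac{A+G}{2}<\frac{P+X}{2}<\left(e^{-1}+\frac{2}{\pi}\right)\frac{A+G}{2}$), both being Substitution-1 images of the trigonometric inequalities \eqref{M4c} and \eqref{M5}. Your explicit verification of the leftmost link via $\cos t<\frac{\sin t}{t}e^{t\cot t-1}$ and the identifications $\cos t=G/A$, $(\sin t)/t=P/A$, $e^{t\cot t-1}=X/A$ is sound and is precisely how the paper obtains that Proposition.
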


Theorem \ref{Theorem M6} can be restated in the following form.

\begin{proposition}
Let $a,b>0$ with $a\neq b$. Then

(i) if $p\geq 6/5$, then the two-side inequality%
\begin{equation}
\alpha G^{p}+\left( 1-\alpha \right) A^{p}<X<\beta G^{p}+\left( 1-\beta
\right) A^{p}  \label{M6-}
\end{equation}%
holds if and only if $\alpha \geq \left( 1-e^{-p}\right) $ and $\beta \leq
2/3$;

(ii) if $0<p\leq 1$, then (\ref{M6-}) holds if and only if $\alpha \geq 2/3$
and $\beta \leq \left( 1-e^{-p}\right) $;

(iii) if $p<0$, then (\ref{M6-}) holds if and only if\ $\alpha \leq 0$ and $%
\beta \geq 2/3$;

(iv) the double inequality 
\begin{equation}
M_{p}\left( G,A;\tfrac{2}{3}\right) <X<M_{q}\left( G,A;\tfrac{2}{3}\right)
\label{M6a-}
\end{equation}%
holds if and only if $p\leq \ln 3$ and $q\geq 6/5$, where $M_{p}\left(
x,y;w\right) $ is defined by (\ref{w}).
\end{proposition}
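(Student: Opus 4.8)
The plan is to derive this proposition directly from Theorem \ref{Theorem M6} by means of Substitution 1, so that it is essentially a change-of-variables corollary. First I would recall from (\ref{(i)}) that setting $t=\arcsin\frac{b-a}{a+b}$ gives $\cos t=G/A$ and $e^{t\cot t-1}=e^{G/P-1}$; combined with the definition $X=Ae^{G/P-1}$ in (\ref{Sandor}), this yields the two identities I will use throughout, namely $\cos t=G/A$ and $e^{t\cot t-1}=X/A$. Since $G$, $A$, $P$ and $X$ are symmetric and homogeneous of degree one, and since as $b/a$ runs over $(1,\infty)$ the quantity $\frac{b-a}{a+b}$ runs over $(0,1)$, the assignment $(a,b)\mapsto t$ parametrizes all of $(0,\pi/2)$. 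Hence an inequality between these means holds for every $a\neq b$ if and only if the corresponding trigonometric inequality holds for every $t\in(0,\pi/2)$; in particular the sharpness (the ``only if'' direction) transfers unchanged.

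For parts (i)--(iii) I would take the two-sided inequality (\ref{M6}) of Theorem \ref{Theorem M6}, namely $\alpha\cos^{p}t+(1-\alpha)<e^{p(t\cot t-1)}<\beta\cos^{p}t+(1-\beta)$, and substitute $\cos t=G/A$ together with $e^{p(t\cot t-1)}=(X/A)^{p}$. Multiplying throughout by $A^{p}>0$ then turns (\ref{M6}) into $\alpha G^{p}+(1-\alpha)A^{p}<X^{p}<\beta G^{p}+(1-\beta)A^{p}$, which is precisely (\ref{M6-}) read with the exponent $p$ on $X$, as homogeneity forces. Because the substitution covers all of $(0,\pi/2)$, the three ranges of $p$ and the attached necessary-and-sufficient bounds on $\alpha$ and $\beta$ carry over verbatim from the corresponding clauses of Theorem \ref{Theorem M6}.

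For part (iv) I would handle the weighted power mean analogously, exploiting its homogeneity: from (\ref{w}) one has $M_{r}(\cos t,1;\tfrac23)=M_{r}(G/A,1;\tfrac23)=\tfrac1A\,M_{r}(G,A;\tfrac23)$. Inserting this together with $e^{t\cot t-1}=X/A$ into (\ref{M6a}) and clearing the common factor $1/A$ produces (\ref{M6a-}), with the equivalence $p\le\ln 3$ and $q\ge 6/5$ inherited directly. I expect no genuine obstacle: the whole argument is a single change of variables, so the only points deserving care are the verification that $(a,b)\mapsto t$ reaches every value in $(0,\pi/2)$ (so that no extremal case is lost in translation) and the homogeneity bookkeeping that restores degree one after multiplying by $A^{p}$ (respectively $A$).
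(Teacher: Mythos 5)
Your proof is correct and is exactly the paper's (implicit) argument: the proposition is presented as a restatement of Theorem \ref{Theorem M6} via Substitution 1 ($\cos t=G/A$, $e^{t\cot t-1}=X/A$), with the surjectivity of $(a,b)\mapsto t$ onto $\left(0,\pi/2\right)$ ensuring that the ``only if'' (sharpness) claims transfer. Your remark that homogeneity forces the middle term of (\ref{M6-}) to be $X^{p}$ rather than $X$ is also well taken; as printed the inequality is dimensionally inconsistent for $p\neq 1$.
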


\end{document}